\newtheorem{theorem}{Theorem}
\newtheorem{lemma}[theorem]{Lemma}
\newtheorem{definition}{Definition}
\newtheorem{proposition}[theorem]{Proposition}
\def\R{\mathbb{R}}
\def\PRPC{PrP\textsuperscript{C} }
\def\PRPSC{PrP\textsuperscript{Sc} }
\def\Ab{A$\upbeta$}
\def\bamyloid{$\upbeta$-amyloid }
\def\bamyloids{$\upbeta$-amyloids }
\def\fin{f^{in}}
\def\uin{u^{in}}
\def\pin{p^{in}}
\def\bin{b^{in}}
\newcommand{\ds}{\displaystyle}
\newcommand{\signmh}{ Mohamed Helal \par  Universit\'e Djillali Liabes, Facult\'e des Sciences, D\'epartement de Math\'ematique \par  Sidi Bel Abbes 22000, Alg\'erie \par E-mail: {\tt mhelal$\_$abbes@yahoo.fr} }
\newcommand{\signlpm}{ Laurent Pujo-Menjouet \par   Universit\'e de Lyon, CNRS UMR 5208, Universit\'e Lyon 1, Institut Camille Jordan \par  43 blvd. du 11 novembre 1918, F-69622 Villeurbanne cedex, France\par  E-mail: {\tt  pujo@math.univ-lyon1.fr}}
\newcommand{\signgfw}{ Glenn F. Webb \par  Department of Mathematics, Vanderbilt University \par  1326 Stevenson Center, Nashville, TN 37240-0001, USA \par  E-mail: \tt {glenn.f.webb@vanderbilt.edu}}
\newcommand{\signeh}{ Erwan Hingant \par  CI\textsuperscript{2}MA, Universidad de Concepci\'on, Chile\par  E-mail: {\tt ehingant@ci2ma.udec.cl }}
\begin{document}

\title[Alzheimer's disease: analysis of a mathematical model]{Alzheimer's disease: analysis of a mathematical model incorporating the role of prions}

\author[M. Helal, E. hingant, L. Pujo-Menjouet \& G. F. Webb]{Mohamed Helal \and Erwan Hingant \and Laurent Pujo-Menjouet \and  Glenn F. Webb}

\date{\today}

\keywords{Alzheimer; prion; mathematical model; well-posedness; stability}

\subjclass[2000]{35F61; 92B05; 34L30}

\thanks{This work was supported by ANR grant MADCOW no. 08-JCJC-0135-CSD5. E. H. was partially supported by FONDECYT (Grant no. 3130318) and would thanks A. Rambaud for helpfull discussion to improve this paper.}

\begin{abstract}
 We introduce a mathematical model of the \emph{in vivo} progression of Alzhei\-mer's disease with focus on the role of prions in memory impairment. Our model consists of differential equations that describe the dynamic formation of \bamyloid plaques based on the concentrations of \Ab~oligomers, \PRPC proteins, and the \Ab-$\times$-\PRPC complex,  which are hypothesized to be responsible for  synaptic toxicity. We prove the well-posedness of the model and provided stability results for its unique equilibrium, when the polymerization rate of \bamyloid is constant and also when it is described by a  power law.
\end{abstract}

\maketitle


 \setcounter{tocdepth}{2} 

 \parskip=0pt

 \tableofcontents

 \parskip=\smallskipamount


\section{Introduction} \label{sec:intro}

\subsection{What is the link between Alzheimer disease and prion proteins?} Alzheimer's disease (AD) is acknowledged as one of the most widespread diseases of age-related dementia with $\approx$ 35.6 million people infected worldwide (World Alzheimer Report 2010 \cite{Wimo2010}).  By the 2050's, this same report has  predicted three or four times more people living with AD. AD affects memory, cognizance, behavior, and eventually leads to death. Apart from the social dysfunction of patients, another notable societal consequence of AD is its economic cost ($\approx$ \$422 billion in 2009 \cite{Wimo2010}). The human and social impact of AD has driven extensive research to understand its causes and to develop effective therapies. Among recent findings are the results that imply cellular prion protein (\PRPC) is connected to memory impairment \cites{Cisse2009,Cisse2011,Gimbel2010,Lauren2009,Nath2012}. This connection is the focus of our modeling here, which we hope will contribute to  understanding the relation
of AD to prions.

The pathogenesis of AD is related to a gradual build-up of \bamyloid (\Ab) plaques in the brain \cites{Duyckaerts2009,Hardy2002}. \bamyloid~plaques are formed from the \Ab~peptides obtained from the amyloid protein precursor (APP) protein cleaved at a displaced position. There exist different forms of \bamyloids, from soluble monomers to  insoluble fibrillar aggregates \cites{Chen2011,Lomakin1996a,Lomakin1997,Urbanc1999,Walsh1997}. It has been revealed that the toxicity depends on the size of these structures and recent evidence suggest that oligomers (small aggregates) play a key role in memory impairment rather than \bamyloid~plaques (larger aggregates) formed in the brain \cite{Selkoe2008}. More specifically, \Ab~oligomers  cause memory impairment \emph{via} synaptic toxicity onto neurons.  This phenomenon seems to be induced by a membrane receptor, and there is evidence that this rogue agent is the \PRPC protein \cites{Nygaard2009,Zou2011,Resenberger2011,Gimbel2010,Lauren2009} We note that this protein,
when misfolded in a pathological form called \PRPSC,
is responsible for Creutzfeldt-Jacob disease. Indeed, it is believed that there is a high affinity between \PRPC and \Ab~oligomers, at least theoretically \cite{Gallion2012}. Moreover, the prion protein has also been identified as an APP regulator, which confirms that both are highly related \cites{Nygaard2009,Vincent2010}. This discovery offers a new therapeutic target to recover memory in AD patients, or at least slow memory depletion \cites{Freir2011,CHung2010}.

\subsection{What is  our objective?}
Our objective here is to introduce and study a new \emph{in vivo}  model of AD evolution mediated by \PRPC proteins. To the best of our knowledge, no model such as the one proposed here, has yet been advanced. There exist a variety of models specifically designed for Alzheimer's disease and their treatment, such as in \cites{Achdou2012,Calvez2009,Calvez2010,Craft2005,Craft2002,Gabriel2011,Greer2006,Greer2007,Laurencot2012,Pruss2006,Simonett2006}.
Nevertheless, the prion protein has never been taken into account in the way we formulate here, and our model could helpful in designing new experiments and treatments..

 This paper is organized as follows. We present the model in section \ref{sec:themodel}, and provide a well-posedness result in the particular case that \bamyloids  are formed at a constant rate. In  section \ref{power} we provide a theoretical study of our model in a more general context with a power law rate of polymerization, \emph{i.e.} the polymerization or build-up rate depends on \bamyloid~plaque size.

\section{The model} \label{sec:themodel}

\subsection{A model for beta-amyloid formation with prions} \label{ssec:model}

The model deals with four different species. First, the concentration of \Ab~oligomers consisting of  aggregates of a few \Ab~peptides; second, the concentration of the \PRPC~protein; third,  the concentration of the complex formed from one \Ab~oligomer binding onto one \PRPC~protein. These quantities are soluble and their concentration will be described in terms of ordinary differential equations. Fourth, we have the insoluble \bamyloid~plaques described by a density according to their size $x$. This approach is standard in modeling prion proliferation phenomena (see for instance \cites{Greer2006,Calvez2009,Prigent2012}).  Note that the size $x$ is an abstract variable that could be the volume of the aggregate. Here, however, we view aggregates as fibrils that lengthen in one dimension. The size variable $x$ thus belongs to the interval $(x_0, +\infty)$, where $x_0>0$ stands for a critical size below which the plaques cannot form. To summarize we denote, for $x \in (x_0, +\infty)$ and $t \geq 0$,
\begin{itemize}
\item $f(t,x)\geq 0$ : the density of \bamyloid~plaques  of size $x $ at time $t$, \vspace{0.6em}
\item $u(t)\geq 0$: the concentration of soluble \Ab~oligomers (unbounded oligomers) at time $t $,\vspace{0.6em}
\item $p(t)\geq 0$: the concentration of soluble cellular prion proteins \PRPC~at time $t$,\vspace{0.6em}
\item $b(t)\geq 0$: the concentration of \Ab-$\times$-\PRPC complex (bounded oligomers) at time $t$.
\end{itemize}

\begin{figure}[!ht]
\centering
\includegraphics[width=129mm]{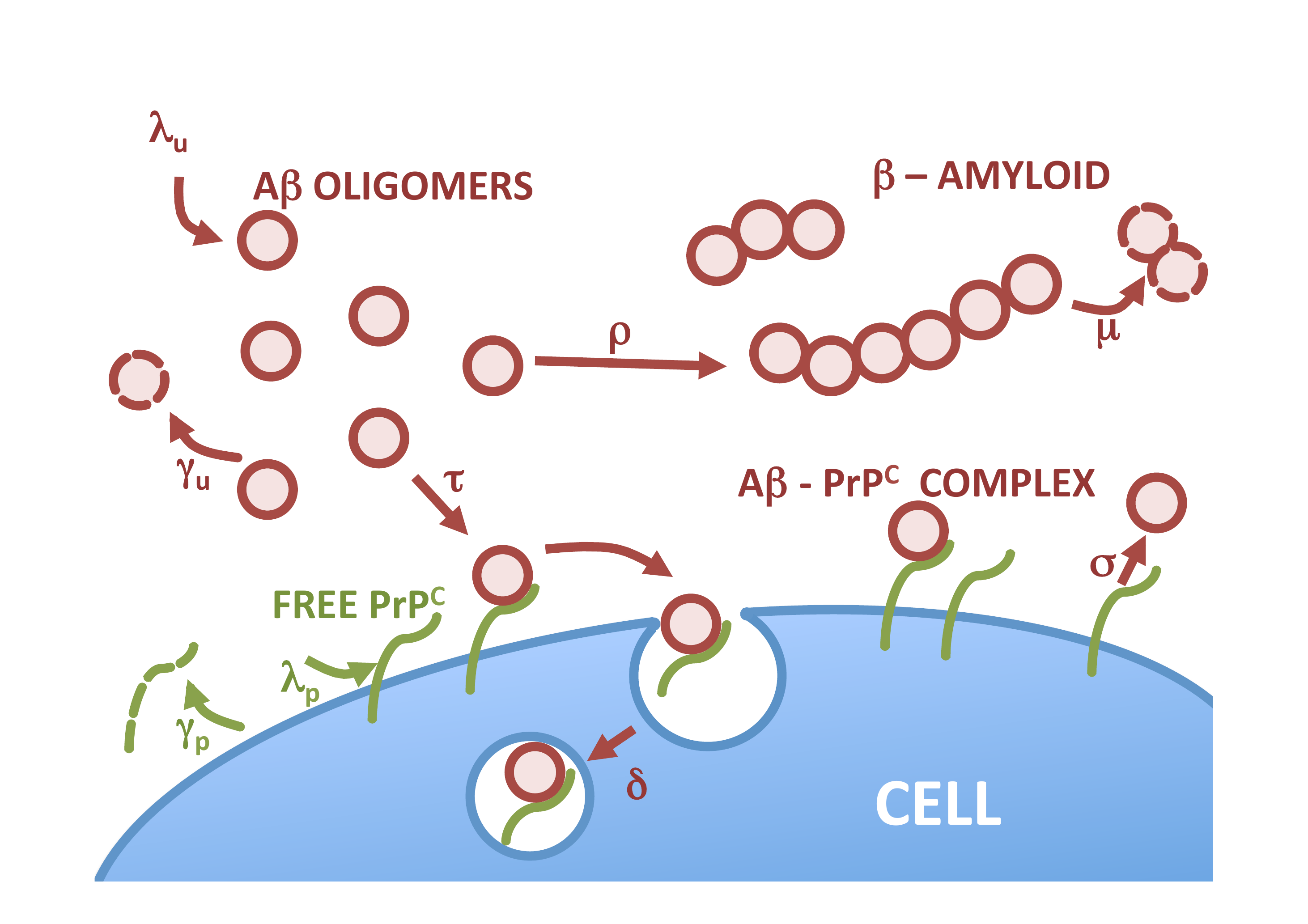}
\caption{Schematic diagram of the evolution processes of \bamyloid plaques, \Ab~oligomers (bounded and unbounded), and \PRPC in the model.}
\label{Figure1}
\end{figure}

 Note that \bamyloid~plaques  are formed from the clustering of \Ab~oligomers. The rate of agglomeration depends on the concentration of soluble oligomers and the structure of the amyloid which is linked to its size. It occurs in a mass action between plaques and oligomers at a nonnegative rate given by $\rho(x)$, where $x$ is the size of the plaque.  This is the reason why the intentionally misused word ``size'' considered here (and described above) accounts for the mass of \Ab~oligomers that form the polymer. We assume indeed, that the mass of one oligomer is given by a ``sufficiently small'' parameter $\varepsilon >0 $. Thus, the number of oligomers in a plaque of mass $x \, > 0$ is $x/\varepsilon$ which justifies our assumption that the size of plaques is a continuum. Moreover, amyloids have a critical size  $x_0 = \varepsilon n \, > 0$,
where $n \, \in \mathbb{N}^{*}$ is the number of oligomers in the critical plaque size. The amyloids are prone to  be damaged at a nonnegative rate $\mu$, possibly dependent on the size $x$ of the plaques.  All the parameters for \Ab~oligomers, \PRPC, and \bamyloid~plaques, such as production, binding  and degradation rates, are nonnegative and described in table \ref{tab:parameter}.

Then, writing evolution equations for these four quantities, we obtain
\begin{align}
&\frac{\partial}{\partial t} f(x,t) + u(t)\frac{\partial}{\partial x}\big[ \rho(x)f(x,t) \big] = -\mu(x)f(x,t) \quad \text{on }  (x_0,+\infty)\times(0,+\infty), \label{eq:seq1} \vphantom{\int_{x_0}^{+\infty}}\\
&\dot{u} = \lambda_u - \gamma_u u -\tau u p + \sigma b - n N(u) - \frac 1 \varepsilon u \int_{x_0}^{+\infty} \rho(x) f(x,t) d x \quad \text{on }  (0,+\infty), \label{eq:seq2} \vphantom{\int_{x_0}^{+\infty}}  \\
&\dot{p} = \lambda_p - \gamma_p p -\tau u p + \sigma b \quad \text{on }  (0,+\infty), \label{eq:seq3} \vphantom{\int_{x_0}^{+\infty}}\\
&\dot{b} = \tau u p - (\sigma + \delta) b \quad \text{on }  (0,+\infty). \label{eq:seq4} \vphantom{\int_{x_0}^{+\infty}}
\end{align}
The term $N$ accounts for the formation rate of a new \bamyloid plaque with size $x_0$ from the \Ab~oligo\-mers. In order to balance this term, we add the boundary condition
\begin{equation} \label{eq:boundary}
u(t) \rho(x_0) f(x_0,t) = N(u(t)),\quad   t\geq 0.
\end{equation}
The integral in the right-hand side of equation \eqref{eq:seq2} is the total polymerization with parameters$ 1/\varepsilon$, since $\mathrm d x/ \varepsilon$ counts the number of oligomers into a unit of length $\mathrm d x$. Finally, the problem is completed with nonnegative initial data, a function $\fin\geq 0$ and  $\uin,\ \pin,\ \bin \geq 0$, such that at time $t=0$
\begin{equation}
f(\cdot,t=0) = \fin \quad  \text{on } (x_0,+\infty), \label{eq:initial_f}
\end{equation}
and
\begin{equation}
u(t=0)=\uin, \quad p(t=0) = \pin \quad \text{and} \quad b(t=0) = \bin.  \label{eq:initial_upb}
\end{equation}

The above system (\ref{eq:seq1}-\ref{eq:boundary}) involves two formal balance laws: the first one for prion proteins
\begin{equation*}
 \frac{d}{dt} \left(b + p \right) = \lambda_p - \gamma_p p - \delta b,
\end{equation*}
and the second for \Ab~oligomers
\begin{equation*}
 \frac{d}{dt} \left(b + u + \frac 1 \varepsilon \int_{x_0}^{+\infty} x f dx \right) = \lambda_u - \gamma_u u - \delta b - \frac 1 \varepsilon \int_{x_0}^{+\infty} x \mu f dx.
\end{equation*}
The total concentrations of both evolve in time according to the production and degradation rates. In figure \ref{Figure1} we give a schematic representation of these processes.

 \begin{table}[t]
 \footnotesize\centering
\begin{tabular}{lll}
  Parameter/Variable	& Definition 	& Unit  \smallskip\\
\hline\noalign{\smallskip}
	$t$			& Time 							&  days 		\smallskip\\
	$x$ 			&  size of  \bamyloid plaques 				&  -- 		\smallskip\\
	$x_0$ 			& Critical size of \bamyloid plaques 			&  --  		\smallskip\\
	$n$ 			& Number of oligomers in a plaque of size $x_0$		&  --  			\smallskip\\
	$\varepsilon$ 		& Mass of one oligomer 					&  --  	\smallskip\\
\hline\noalign{\smallskip}
	$\lambda_u$ 	& Source of \Ab~oligomers 					&  days$^{-1}$ 						\smallskip\\
	$\gamma_u$  		& Degradation rate of \Ab~oligomers 			&  days$^{-1}$ 						\smallskip\\
\hline\noalign{\smallskip}
	$\lambda_p$ 		& Source of PrP$^C$  							&  days$^{-1}$ 						\smallskip\\
	$\gamma_p$ 			& Degradation rate of PrP$^C$ 					&  days$^{-1}$ 						\smallskip\\
\hline\noalign{\smallskip}
	$\tau$ 				& Binding rate of \Ab~oligomers onto \PRPC 		&  days$^{-1}$ 						\smallskip\\
	$\sigma$ 			& Unbinding rate of   \Ab-$\times$-\PRPC  		&  days$^{-1}$						\smallskip\\
	$\delta$ 			& Degradation rate of \Ab-$\times$-\PRPC 		&  days$^{-1}$						\smallskip\\
\hline\noalign{\smallskip}
	$\rho(x)$ 			& Conversion rate of oligomers into a plaque 	&  (SAF/sq)$^{-1 \;*} \cdot$days$^{-1}$ 	\smallskip\\
	$\mu(x) $ 			& Degradation rate of a plaque 					&  days$^{-1}$ 						\smallskip\\
\hline

\end{tabular}
\label{tab:parameter}
\caption{Parameter description of the model. $^*$ SAF/sq  means Scrapie-Associated Fibrils per square unit and is explained in detail by Rubenstein et al. \cite{RUB1991} (we consider plaques as being fibrils here).}
\end{table}


\subsection{An associated ODE system}\label{subsec:associate_ode}


In this section we investigate constant polymerization and degradation rates, \emph{i.e}, rates independent of the size of the plaque involved in the process. This first approach is biologically less realistic, but technically more tractable, yet still quite challenging for an analytical study of the problem. In  section \ref{power}, the polymerization rate $\rho$ will be taken more realistically as a power of $x$. Here we assume that $\rho(x) := \rho \text{ and } \mu(x):= \mu$
are positive constants. Moreover, without loss of generality, we let $\varepsilon = 1$, which only requires a rescaling of the units in  the equations. Then, we assume a pre-equilibrium hypothesis for the formation of \bamyloid plaques, as formulated in \cite{Portet2009} for filaments, by setting
$N(u) = \alpha u^n$.
The formation rate is given by $\alpha>0$ and the number of oligomers necessary to form a new plaque is an integer, $n\geq1$. With these assumptions we are able to close the system (\ref{eq:seq1}-\ref{eq:seq4}) with respect to \eqref{eq:boundary} into a system of four differential equations. Indeed, integrating \eqref{eq:seq1} over $(x_0,+\infty)$ we get formally an equation over the quantity of amyloids at time $t\geq0$
\begin{equation*}
A(t) =\int_{x_0}^{+\infty} f(x,t) dx.
\end{equation*}
This method has already been used on the prion model in \cite{Greer2006}. Now the problem reads, for $t\geq0$,
\begin{align}
&\dot A = \alpha u^n - \mu A,  \label{eq:edo1} \vphantom{\int_0^\infty}\\
&\dot{u} = \lambda_u - \gamma_u u -\tau u p + \sigma b - \alpha n  u^n - \rho u A, \label{eq:edo2}\vphantom{\int_0^\infty}\\
&\dot{p} = \lambda_p - \gamma_p p -\tau u p + \sigma b, \label{eq:edo3}\vphantom{\int_0^\infty}\\
&\dot{b} = \tau u p - (\sigma + \delta) b. \label{eq:edo4}\vphantom{\int_0^\infty}
\end{align}
The mass of \bamyloid plaques is given by $M(t) = \int_{x_0}^{+\infty} x f(x,t) dx$  which satisfies an equation (formal integration of \eqref{eq:seq1}) that can be solved independently, since
\begin{equation} \label{eq:edo_mass}
\dot M =  n \alpha u^n + \rho u A - \mu M.
\end{equation}
Notice that initial conditions for $A$ and $M$ are given by $A^{in}=\int_{x_0}^{+\infty}  \fin(x) dx$ and $M^{in} = \int_{x_0}^{+\infty} x \fin(x) dx$, while the initial conditions for $u$, $p$ and $b$ are unchanged.

The next subsection is devoted to the analysis of the system (\ref{eq:edo1}-\ref{eq:edo4}).

\subsection{Well-posedness and stability of the ODE system}\label{subsec:stability_edo}

We prove in the following proposition the positivity,  existence, and uniqueness of a global solution to the system (\ref{eq:edo1}-\ref{eq:edo4}) with classical techniques from the theory of ordinary differential equations(\cite{Khalil2002}).
\begin{proposition}[Well-posedness] \label{prop:exist}
Assume  $\lambda_u$, $\lambda_p$, $\gamma_u$, $\gamma_p$, $\tau$, $\sigma$, $\delta$, $\rho$ and $\mu$ are positive, and  let $n\geq 1$ be an integer. For any $(A^{in},\uin,\pin,\bin)\in\R^4_+$ there exists a unique nonnegative bounded solution $(A,u,p,b)$ to the system (\ref{eq:edo1}-\ref{eq:edo4}) defined for all time $t>0$, \emph{i.e}, the solution $A$, $u$, $p$ and $b$ belong to $\mathcal{C}^1_b(\R_+)$ and remains in the stable subset
\begin{equation} \label{eq:stableset}
S=\Bigg\{(A,u,p, b)\in\mathbb{R}^4_+:\;  nA+u+p+2b  \leq nA^{in}+\uin+\pin+2\bin+\frac{\lambda}{m}\Bigg\}
\end{equation}
with $\lambda=\lambda_u+\lambda_p$ and $m=\min\{\mu,\gamma_u,\gamma_p,\delta\}$. Furthermore, let $M(t=0)=M^{in}\geq0$, and then there exist a unique nonnegative solution $M$ to \eqref{eq:edo_mass}, defined for all time $t>0$.
\end{proposition}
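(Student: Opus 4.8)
The plan is to combine the standard Cauchy--Lipschitz machinery for local well-posedness with two invariance arguments---one for positivity and one for the global bound---and then to treat the mass equation separately as a scalar linear ODE. First I would note that the right-hand side of \eqref{eq:edo1}--\eqref{eq:edo4} is polynomial, where it is essential that $n$ is an integer so that $u^n$ is smooth; hence the vector field is locally Lipschitz on $\R^4$, and the Picard--Lindel\"of theorem furnishes a unique $\mathcal{C}^1$ solution on a maximal interval $[0,T_{\max})$ for every datum $(A^{in},\uin,\pin,\bin)\in\R^4_+$. To see that the nonnegative orthant is positively invariant, I would verify the sub-tangentiality (quasi-positivity) condition face by face: at $A=0$ one has $\dot A=\alpha u^n\geq0$, at $u=0$ one has $\dot u=\lambda_u+\sigma b\geq0$, at $p=0$ one has $\dot p=\lambda_p+\sigma b\geq0$, and at $b=0$ one has $\dot b=\tau u p\geq0$, each time because the remaining variables are nonnegative. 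The classical invariance criterion for locally Lipschitz fields (see \cite{Khalil2002}) then keeps the solution in $\R^4_+$ throughout $[0,T_{\max})$.

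The heart of the argument is the a priori bound, obtained from the weighted functional $V:=nA+u+p+2b$, whose weights $n$ and $2$ are chosen precisely so that the nonlinear and sign-indefinite contributions telescope. Differentiating along the flow, the polymerization terms cancel via $n\alpha u^n-\alpha n u^n=0$, the three $\tau u p$ terms sum to zero, and the $\sigma b$ terms cancel, leaving
\begin{equation*}
\dot V=\lambda-\gamma_u u-\gamma_p p-2\delta b-n\mu A-\rho u A,\qquad \lambda:=\lambda_u+\lambda_p.
\end{equation*}
Discarding the harmless term $-\rho u A\leq0$ and bounding each remaining rate below by $m=\min\{\mu,\gamma_u,\gamma_p,\delta\}$---legitimate because all variables are nonnegative---yields the differential inequality $\dot V\leq\lambda-mV$. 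Gr\"onwall's lemma then gives $V(t)\leq\max\{V(0),\lambda/m\}\leq V(0)+\lambda/m$ for all $t\in[0,T_{\max})$, which is exactly the inequality defining the set $S$ in \eqref{eq:stableset}.

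Since $V$ dominates each (nonnegative) component, the solution cannot blow up in finite time, so the standard continuation theorem forces $T_{\max}=+\infty$; the solution is therefore global and bounded, hence lies in $\mathcal{C}^1_b(\R_+)$, and remains in $S$. For the mass, once $(A,u,p,b)$ is determined the function $g(t):=n\alpha u(t)^n+\rho u(t)A(t)$ is continuous and nonnegative on $\R_+$, and \eqref{eq:edo_mass} reduces to the scalar linear equation $\dot M=g(t)-\mu M$. Its unique solution, given by the variation-of-constants formula $M(t)=M^{in}e^{-\mu t}+\int_0^t e^{-\mu(t-s)}g(s)\,ds$, is manifestly nonnegative because $M^{in}\geq0$ and $g\geq0$, and is defined for all $t>0$.

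The main obstacle is identifying the correct weights in $V$: without the factor $n$ on $A$ the polymerization terms $\pm\alpha n u^n$ would fail to cancel, and without the factor $2$ on $b$ the binding/unbinding terms $\pm\tau u p$ and $\pm\sigma b$ would not telescope, leaving a quantity that admits no dissipative bound. Everything else is routine once this conserved-up-to-dissipation structure is uncovered; in particular, positivity must be established \emph{before} the bound, since the comparison $n\mu A\geq mnA$ and the discarding of $-\rho u A$ both rely on the nonnegativity of the components.
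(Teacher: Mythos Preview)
Your proof is correct and follows essentially the same route as the paper's: local Lipschitz continuity of the polynomial vector field, the face-by-face quasi-positivity check, the weighted functional $V=nA+u+p+2b$ combined with Gr\"onwall to obtain the invariant set $S$, and then continuation to global existence. Your write-up is in fact more explicit than the paper's---you spell out the cancellations in $\dot V$, invoke the continuation theorem, and give the variation-of-constants formula for $M$---but the underlying argument is identical.
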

\begin{proof}
Let $F: \R^4 \mapsto \R^4$ be given by
\begin{equation*}
F(A,u,p,b) = \left( \begin{aligned}
&F_1 := \alpha u^n - \mu A \\
&F_2:= \lambda_u - \gamma_u u -\tau u p + \sigma b - \alpha n  u^n - \rho u A \\
&F_3:=\lambda_p - \gamma_p p -\tau u p + \sigma b \\
&F_4:=\tau u p - (\sigma + \delta) b
\end{aligned} \right).
\end{equation*}
$F$ is obviously $\mathcal{C}^1$ and locally Lipschitz continuous on $\R^4$. Moreover, if $(A,u,p,b)\in\R^4_+$, $F_1 \geq 0$ when $A=0$,  $F_2 \geq 0$ when $u=0$,  $F_3 \geq 0$ when $p=0$, and  $F_4 \geq 0$ when $b=0$. Thus, the system is quasi-positive and  the solution remains in $\R^4_+$. Finally, we remark that
\begin{equation*}
\frac{d}{d t} \left(nA + u + p + 2b \right) \leq \lambda - m \left(nA + u + p + 2b \right),
\end{equation*}
with $\lambda = \lambda_u + \lambda_p$ and $m = \min \left\{\mu,\gamma_u,\gamma_p,\delta\right\}>0$, and Gronwall's lemma ensures that
\begin{equation*}
nA(t) + u(t) + p(t) + 2b(t)  \leq nA^{in} + \uin + \pin + 2\bin + \frac \lambda m.
\end{equation*}
This proves the global existence of a unique nonnegative bounded solution $(A,u,p,$ $b)$. The claim for the mass $M$ is straightforward.
\end{proof}
We next consider the existence of a steady state $A_\infty$, $u_\infty$, $p_\infty$, $b_\infty$ and the asymptotic behavior of solutions to (\ref{eq:edo1}-\ref{eq:edo4}). It is easy to compute the steady state by solving the problem
\begin{align}
& \mu A_\infty - \alpha u_\infty^n = 0 \label{eq:sstate1} \vphantom{\int_0^\infty}\\
& \lambda_u - \gamma_u u_\infty - \tau u_\infty p_\infty + \sigma b_\infty - \alpha n u_\infty^n - \rho u_\infty A_\infty =0 \label{eq:sstate2} \vphantom{\int_0^\infty}\\
& \lambda_p - \gamma_p p_\infty - \tau u_\infty  p_\infty  + \sigma b_\infty= 0 \label{eq:sstate3}\vphantom{\int_0^\infty} \\
&  \tau u_\infty p_\infty - (\delta +\sigma)b_\infty =0  \label{eq:sstate4}\vphantom{\int_0^\infty}
\end{align}
From the structure of the second equation, we cannot give an explicit formula for this problem. To obtain $u_\infty$ we have to solve an algebraic equation, which involves a polynomial of degree $n$. However, we can prove that the solution exists, and then $u_\infty$ is given implicitly.  The next proposition  establishes the  local stability of the steady state..
\begin{theorem}[Linear stability] \label{thm:equilibrium}
Under hypothesis of proposition \ref{prop:exist}, there exists a unique positive steady state $A_\infty$, $u_\infty$, $p_\infty$ and $b_\infty$ to (\ref{eq:edo1}-\ref{eq:edo4}) with
\begin{equation*}
A_\infty = \frac \alpha \mu u_\infty^n, \quad p_\infty = \frac{\lambda_p}{\tau^* u_\infty + \gamma_p}, \quad b_\infty = \frac{1}{\sigma} \frac{\lambda_p (\tau - \tau^*) }{\tau^* u_\infty + \gamma_p} u_\infty,
\end{equation*}
where $\tau^* = \tau (1-\sigma/(\delta+\sigma)$ and  $u_\infty$ is the unique positive root of $Q$, defined by
\begin{equation*}
Q(x) = \gamma_p\lambda_u + a  x  - P(x), \ \mathrm{for \;every} \;  x\geq 0
\end{equation*}
with $a =\tau^* (\lambda_u-\lambda_p) - \gamma_u \gamma_p$ and
\begin{equation*}
P(x) = \tau^*\gamma_u x^2 +  \alpha \gamma_p n x^n  + (\alpha \tau^* n+  \rho \gamma_p  \frac{\alpha}{\mu})x^{n+1} +  \rho \tau^*  \frac{\alpha}{\mu}x^{n+2}
\end{equation*}
Moreover, this equilibrium is locally linearly asymptotically stable.
\end{theorem}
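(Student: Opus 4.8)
The plan is to treat the two assertions separately: first the explicit construction and uniqueness of the positive equilibrium, then the spectral analysis of the linearization. For the equilibrium I would solve the stationary system \eqref{eq:sstate1}--\eqref{eq:sstate4} by elimination, expressing everything through $u_\infty$. Equation \eqref{eq:sstate1} gives $A_\infty=(\alpha/\mu)u_\infty^n$ at once. From \eqref{eq:sstate4} one has $b_\infty=\tau u_\infty p_\infty/(\delta+\sigma)$; inserting this into \eqref{eq:sstate3}, the combination $-\tau u_\infty p_\infty+\sigma b_\infty$ collapses to $-\tau^\ast u_\infty p_\infty$ with $\tau^\ast=\tau\delta/(\delta+\sigma)=\tau(1-\sigma/(\delta+\sigma))$, and solving for $p_\infty$ yields $p_\infty=\lambda_p/(\gamma_p+\tau^\ast u_\infty)$; re-substituting reproduces the stated $b_\infty$ (note $\tau-\tau^\ast=\tau\sigma/(\delta+\sigma)>0$, so all three are positive once $u_\infty>0$).

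The key step is then to handle \eqref{eq:sstate2}. After the same cancellation $-\tau u_\infty p_\infty+\sigma b_\infty=-\tau^\ast u_\infty p_\infty$ and substitution of $A_\infty$ and $p_\infty$, I would rewrite it as $\lambda_u=H(u_\infty)$, where
\[
H(u)=\gamma_u u+\frac{\tau^\ast\lambda_p u}{\gamma_p+\tau^\ast u}+\alpha n u^n+\frac{\rho\alpha}{\mu}u^{n+1}.
\]
Each summand is nondecreasing on $[0,+\infty)$, the rational one having derivative $\tau^\ast\lambda_p\gamma_p/(\gamma_p+\tau^\ast u)^2>0$, so $H$ is continuous and strictly increasing with $H(0)=0$ and $H(u)\to+\infty$; since $\lambda_u>0$ there is exactly one positive root $u_\infty$. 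Multiplying $\lambda_u-H(u)$ by the positive factor $(\gamma_p+\tau^\ast u)$ reproduces precisely $Q(u)$, which identifies the coefficient $a$ and the polynomial $P$ and shows $u_\infty$ is the unique positive root of $Q$. This part is routine; the only care needed is the algebraic bookkeeping.

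For local stability I would compute the Jacobian $J$ of the right-hand side of \eqref{eq:edo1}--\eqref{eq:edo4} at $(A_\infty,u_\infty,p_\infty,b_\infty)$. Its diagonal entries are $-\mu$, $-e_2$, $-(\gamma_p+\tau u_\infty)$, $-(\sigma+\delta)$ with $e_2=\gamma_u+\tau p_\infty+\alpha n^2 u_\infty^{n-1}+\rho A_\infty$, all strictly negative. Grouping the variables as $(A,u)$ and $(p,b)$, $J$ has a $2\times2$ block form whose diagonal blocks are both Hurwitz: the $(A,u)$ block has negative trace and determinant $\mu e_2+\alpha n\rho u_\infty^{n}>0$, while the $(p,b)$ block has negative trace and determinant $(\gamma_p+\tau u_\infty)(\sigma+\delta)-\sigma\tau u_\infty=\gamma_p(\sigma+\delta)+\delta\tau u_\infty>0$ — here the dissipation $\delta>0$ is essential. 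Moreover the two coupling blocks are each of rank one and are mediated solely through the variable $u$.

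The remaining and main task is to pass from the stability of the diagonal blocks to that of the full matrix, and I would do this through the Routh--Hurwitz criterion applied to the characteristic polynomial $\lambda^4+c_3\lambda^3+c_2\lambda^2+c_1\lambda+c_0$: one checks $c_3,c_1,c_0>0$, $c_3c_2-c_1>0$, and the delicate inequality $c_1(c_3c_2-c_1)-c_3^2c_0>0$. The first conditions follow from the negativity of the diagonal and the block determinants after a short expansion, but the product inequality is the genuine obstacle: it must be established by writing the $c_i$ in terms of the structural constants and exploiting both the rank-one sign pattern of the couplings and the equilibrium identities, with $\delta>0$ again decisive. As a fallback I would keep in reserve a block-diagonal quadratic Lyapunov function $V=\tfrac12\sum_i\omega_i(x_i-x_{i,\infty})^2$ with weights $\omega_i>0$, reducing stability to $J^\top P+PJ\prec0$; the difficulty there is likewise the algebra of choosing the weights so that the indefinite cross terms are dominated.
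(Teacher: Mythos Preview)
Your approach is essentially the paper's. For the equilibrium you eliminate exactly as the authors do; your detour through the strictly increasing function $H(u)$ and the equation $\lambda_u=H(u)$ is just a cleaner rephrasing of their remark that $Q$ vanishes at ``the intersection between a line and a monotonic polynomial on the half plane,'' and multiplying by $\gamma_p+\tau^\ast u$ recovers $Q$ verbatim.

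For stability the paper also computes the Jacobian and applies Routh--Hurwitz to $\lambda^4+a_1\lambda^3+a_2\lambda^2+a_3\lambda+a_4$. The difference is only in what is actually carried out: the authors do not exploit the $2\times2$ block structure or any rank-one coupling argument, nor do they attempt a Lyapunov function here. They simply write the four coefficients $a_i$ out explicitly (relegated to an appendix) in terms of $\mu,\gamma_u,\gamma_p,\tau,\tau^\ast,\sigma,\delta,\alpha,\rho,n,u_\infty,\lambda_p$ and then assert, without further justification, that $a_i>0$ and $a_1a_2a_3>a_3^2+a_1^2a_4$. So the ``delicate inequality'' you single out is not resolved in the paper by any structural idea; it is left as a brute-force symbolic verification from the displayed formulas. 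Your block/rank-one heuristic and the quadratic Lyapunov fallback are thus additions beyond what the paper supplies, but if you intend a self-contained proof you will still need to close that inequality, and the paper offers no shortcut for doing so.
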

\begin{proof}
First, equation \eqref{eq:sstate1} gives $A_\infty$ with respect to $u_\infty$. Then, combining \eqref{eq:sstate3} and \eqref{eq:sstate4} we get $p_\infty$ and $b_\infty$ as functions of $u_\infty$. Now replacing $p_\infty$ and  $b_\infty$ in \eqref{eq:sstate2} we get  $u_\infty$ as the root of $Q$. It is straightforward that $Q$ has a unique positive root. Indeed, it is the intersection between a line and a monotonic polynomial on the half plane.
Now, we linearize the system in $A_\infty$, $u_\infty$,  $p_\infty$ and $b_\infty$. Let $X = (A,u,p,b)^T$ and the linearized system reads
\begin{equation*}
\frac{d}{d t} X = D X,
\end{equation*}
where
\begin{equation*}
D = \left(\begin{array}{cccc}
	- \mu & \alpha n u_\infty^{n-1} & 0 & 0\\
	- \rho u_\infty & \quad \gamma_u -\tau p_\infty -\alpha n^2 u_\infty^{n-1} -\rho A_\infty \quad  & -\tau u_\infty & \sigma \\
	0 & -\tau p_\infty & -(\gamma_p +\tau u_\infty ) & \sigma\\
	0 &\tau p_\infty & \tau u_\infty & -(\sigma + \delta)
	\end{array}\right).
\end{equation*}
The characteristic polynomial is of the form
\begin{equation*}
P (\lambda) = \lambda^4 + a_1 \lambda^3 + a_2 \lambda^2 + a_3 \lambda + a_4,
\end{equation*}
with the $a_i >0$, $i=1\ldots4$ given in the appendix. Moreover it satisfies
\begin{equation*}
a_1 a_2 a_3 > a_3^2 + a_1^2 a_4.
\end{equation*}
Then, according to the Routh-Hurwitz criterion (see   \cite{Allen2007}*{Th. 4.4, page 150}), all the roots of the characterisic polynomial $P$ are negative or have negative real part, thus the equilibrium is locally asymptotically stable.
\end{proof}

To go further, we give a conditional global stability result when no nucleation is considered, \emph{i.e.}, $\alpha = 0$.

\begin{proposition}[Global stability] \label{prop:global_stab}
Assume that $\alpha=0$. Under the condition
\begin{equation*}
 \left(1+2\frac{\delta+\gamma_u}{\sigma}\right)>\frac{\delta}{2\gamma_p}>\frac{\gamma_p}{\sigma},
\end{equation*}
the unique equilibrium is given by
\begin{equation*}
A_\infty = 0, \quad p_\infty = \frac{\lambda_p}{\tau^* u_\infty + \gamma_p}, \quad b_\infty = \frac{1}{\sigma} \frac{\lambda_p (\tau - \tau^*) }{\tau^* u_\infty + \gamma_p} u_\infty,
\end{equation*}
where $u_\infty$  is the unique positive root of $Q(x) = \gamma_p\lambda_u + a x - \tau^*\lambda_u x^2$, with $a = \tau^* (\lambda_u-\lambda_p) - \gamma_u \gamma_p$. Further, this equilibrium is globally asymptotically stable in the stable subset  $S$ defined in \eqref{eq:stableset}.
\end{proposition}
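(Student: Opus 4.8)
The plan is to first eliminate $A$, then reduce to a three-dimensional autonomous problem for which I build a Lyapunov function, and finally lift the conclusion back to the full system. Since $\alpha=0$, equation \eqref{eq:edo1} becomes $\dot A=-\mu A$, so $A(t)=A^{in}e^{-\mu t}\to0$ and necessarily $A_\infty=0$; this also makes the coupling term $-\rho u A$ in \eqref{eq:edo2} an explicit, exponentially decaying forcing. Setting $\dot b=0$ gives $b_\infty=\tau u_\infty p_\infty/(\sigma+\delta)$, substituting into $\dot p=0$ yields $p_\infty=\lambda_p/(\gamma_p+\tau^* u_\infty)$ with $\tau^*=\tau\delta/(\sigma+\delta)$, and $\dot u=0$ then collapses, after clearing the denominator, to $Q(u_\infty)=0$. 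Because $Q$ is a downward parabola with $Q(0)=\gamma_p\lambda_u>0$, it has a unique positive root, giving existence and uniqueness of the positive equilibrium.

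The key structural observation for the stability analysis is that \eqref{eq:edo2}-\eqref{eq:edo4} obeys two affine balance laws in which the bilinear term $\tau up$ cancels: $\frac{d}{dt}(u+b)=\lambda_u-\gamma_u u-\delta b$ and $\frac{d}{dt}(p+b)=\lambda_p-\gamma_p p-\delta b$. Introducing the deviations $\xi=(u+b)-(u_\infty+b_\infty)$, $\eta=(p+b)-(p_\infty+b_\infty)$ and $B=b-b_\infty$, these laws read $\dot\xi=-\gamma_u\xi+(\gamma_u-\delta)B$ and $\dot\eta=-\gamma_p\eta+(\gamma_p-\delta)B$, which are \emph{linear}, while the only nonlinearity survives in $\dot B=\tau(up-u_\infty p_\infty)-(\sigma+\delta)B$. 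I would then try a positive-definite quadratic Lyapunov function $V=\tfrac12\big(w_1\xi^2+w_2\eta^2+w_3B^2\big)$, allowing a $\xi B$ or $\eta B$ cross term if needed, and compute $\dot V$ along trajectories of the limiting ($A\equiv0$) system.

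The main obstacle is the bilinear term. Writing $u-u_\infty=\xi-B$ and $p-p_\infty=\eta-B$, one has $up-u_\infty p_\infty=p_\infty(\xi-B)+u_\infty(\eta-B)+(\xi-B)(\eta-B)$, so $\dot V$ splits into a quadratic form in $(\xi,\eta,B)$ plus a sign-indefinite cubic term stemming from $B(\xi-B)(\eta-B)$; no positive-definite quadratic $V$ can annihilate that cubic. I expect the three inequalities in the hypothesis to be exactly the leading-principal-minor (Sylvester) conditions forcing the quadratic part of $-\dot V$ to be positive definite once the weights $w_i$ are chosen: the equilibrium-dependent entries $\tau u_\infty,\tau p_\infty$ add extra negative damping to $B^2$ and produce cross terms $\xi B,\eta B$ that are then dominated by the diagonal damping $\gamma_u\xi^2,\gamma_p\eta^2,(\sigma+\delta)B^2$. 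The residual cubic is controlled using that every trajectory stays in the bounded forward-invariant set $S$ of Proposition \ref{prop:exist}; where strict negativity is lost I would close with LaSalle's invariance principle, checking that the largest invariant subset of $\{\dot V=0\}$ reduces to the equilibrium.

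Finally I would transfer the conclusion to the full system. The cleanest route is the theory of asymptotically autonomous differential equations: since $A(t)\to0$, the system \eqref{eq:edo2}-\eqref{eq:edo4} is asymptotically autonomous with limit the $A\equiv0$ system, every trajectory is bounded (it lies in $S$), and its $\omega$-limit set is a compact connected invariant set of the limit system; as the latter has the equilibrium as its only invariant set in $S$, the $\omega$-limit set is that equilibrium. Alternatively, when the Lyapunov estimate is strict, the dropped term enters $\dot V$ only through $w_1\xi(-\rho u A(t))$, bounded on $S$ by $Ce^{-\mu t}$, so $\dot V\le-\kappa V+Ce^{-\mu t}$ and a Gronwall comparison gives $V(t)\to0$ directly. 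Either way every trajectory in $S$ converges to $(0,u_\infty,p_\infty,b_\infty)$, establishing global asymptotic stability; the delicate points remain the choice of weights $w_i$ realizing precisely the three stated inequalities and the domination of the cubic remainder on $S$.
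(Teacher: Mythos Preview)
Your reduction of the equilibrium computation and the observation that $A$ decouples when $\alpha=0$ are correct, and the asymptotically-autonomous argument for reincorporating $A$ at the end is a legitimate alternative to what the paper does. However, the core of your Lyapunov construction has a genuine gap that you yourself flag but do not close.

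You correctly note that a purely quadratic $V$ in $(\xi,\eta,B)$ cannot absorb the cubic remainder $B(\xi-B)(\eta-B)$ coming from the bilinear term $\tau up$. Your proposed fix---``the residual cubic is controlled using that every trajectory stays in $S$''---does not work globally: boundedness of trajectories gives a uniform bound on the cubic, but that bound is not small, so the cubic cannot be dominated by the negative-definite quadratic part throughout $S$. At best this yields a local result, which is already covered by Theorem~\ref{thm:equilibrium}. LaSalle does not rescue the argument either, because you have not established $\dot V\leq 0$ on all of $S$; LaSalle requires that inequality as input.

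The paper resolves this by abandoning the purely quadratic ansatz. Its Lyapunov function $\Phi$, written in the original deviations $\theta_i$ from $(A_\infty,u_\infty,p_\infty,b_\infty)$, carries \emph{state-dependent} coefficients: for instance the weight on $\theta_2^2$ is $\tfrac12\big(1+2(\delta+\gamma_u+\rho(A_\infty+\theta_1))/\sigma\big)$, so $\Phi$ contains a genuine cubic term $\rho\theta_1\theta_2^2/\sigma$. Differentiating then produces damping terms in $\dot\Phi$ with factors $\tau u$, $\tau p$, $\rho A$ (the current state, not the equilibrium values), and these are precisely what absorb the bilinear nonlinearity and make $\dot\Phi\leq 0$ on all of $S$, with equality only at the equilibrium. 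A further difference: in the paper the hypothesis $\big(1+2(\delta+\gamma_u)/\sigma\big)>\delta/(2\gamma_p)>\gamma_p/\sigma$ is used to guarantee \emph{positive definiteness of $\Phi$}, not negative definiteness of $\dot\Phi$ as you anticipate. Finally, the paper keeps $A$ inside the Lyapunov function throughout rather than eliminating it first; your route via asymptotically autonomous systems is a reasonable alternative in principle, but only once the three-dimensional Lyapunov argument is actually completed, and for that you will need state-dependent coefficients of the same kind.
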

\begin{proof}
The proof is given by a Lyapunov function $\Phi$ stated in the appendix. It is positive when the condition above is fulfilled and its derivative along the solution to
 the system (\ref{eq:edo1}-\ref{eq:edo4}) is negative definite. Thus, from the LaSalle's invariance principle, we get that under these hypotheses the equilibrium of (\ref{eq:edo1}-\ref{eq:edo4}) is globally asymptotically
 stable.
\end{proof}

\section{A power law polymerization rate}\label{power}

The assumption that the polymerization rate $\rho$ and the degradation rate $\mu$ are constant is not always biologically realistic, as recognized in \cites{Calvez2010,Gabriel2011}. Consequently, we study here the more realistic case  $\rho(x) \sim x^{\theta}$, and in the following we restrict our analysis to $\theta \in(0,1)$. We will see that we are able to obtain a result of existence and uniqueness of solutions for this more general case.

\subsection{Hypotheses and main result}

We are interested in nonnegative solutions to the system (\ref{eq:seq1}-\ref{eq:seq4}) with the boundary condition \eqref{eq:boundary}, completed by initial data \eqref{eq:initial_f} and \eqref{eq:initial_upb}, but with  the new assumption $\rho(x) \sim x^{\theta}$. Moreover, we require that our solution preserves the total mass of \bamyloid in order to be biologically relevant. Hence, the solution $f$ will be sought in the natural space  $L^1(x_0,+\infty;xdx)$, since $xdx$ measures the mass at any time.
Our hypotheses for the system (\ref{eq:seq1}-\ref{eq:seq4}) are
%
\begin{align*}
& \mbox{(H1)} \hspace{5pt}  \left| \hspace{5pt}
	\begin{aligned}
	& \fin \in L^1(x_0,+\infty ; xdx), \,
	\fin \geq 0, \ a.e. \ x > x_0. \vphantom{\int_0^\infty}
	\end{aligned}\right. \\
& \mbox{(H2)} \hspace{5pt} \left| \hspace{5pt}
	\begin{aligned}
	& \rho \, \geq 0\,, \, \,  \, \rho \, \in \,  W^{2,\infty}([x_0,\infty)), \,
	& \mu  \, \geq 0\,, \, \, \, \mu \, \in \, W^{1,\infty}([x_0,\infty)). \vphantom{\int_0^\infty}\\
	\end{aligned}\right. \\
&\mbox{(H3)} \hspace{5pt} \left|  \hspace{5pt}
	\begin{aligned}
	& N \, \geq 0\,, \, \, \, N \, \in \,  W^{1,\infty}_{loc}(\mathbb R_+),  \,
	N(0) = 0. \vphantom{\int_0^\infty}
	\end{aligned}\right.\\
& \mbox{(H4)} \hspace{5pt} \left|  \hspace{5pt}  \lambda_u, \, \gamma_u, \, \lambda_p, \, \gamma_p, \, \tau, \, \sigma, \, \delta  \, > \, 0. \vphantom{\int_0^\infty} \right.
\end{align*}
%
We note that (H2) implies the existence of a constant $C>0$ such that $\rho(x) \leq C x$, with for example, $C =  2 \|\rho'\|_{L^\infty} +  \rho(x_0)/x_0$. For any $x\geq x_0$, we have
$$ \rho(x) \leq \|\rho'\|_{L^\infty} (x +  x_0) + \rho(x_0) \leq   \left( 2 \|\rho'\|_{L^\infty}  + \frac{\rho(x_0)}{x_0} \right)  x.
$$
\noindent  We remark that this kind of regularity of the rate $\rho$ covers the case that  $\rho(x) \sim x^{\theta}$  with $\theta \in (0; 1)$.
Also, (H3) implies the existence of a constant $K_M>0$ such that $N(w)\leq K_M w, \mbox{ for any } w\in[0,M]$.
Further, The nonnegativity of the parameters of table \ref{tab:parameter} (hypothesis (H4)) is a natural assumption with regard to their biological meaning.

We introduce the definition of  a solution to system (\ref{eq:seq1}-\ref{eq:seq4}).
\begin{definition} \label{def:exist}
Consider a function $\fin$ satisfying (H1) and let $\uin$, $\pin$, $\bin$ be three nonnegative real data. Assume that $\rho$, $\mu$, $N$ and all the parameters of table \ref{tab:parameter} verify assumptions (H2) - (H4), and let $T>0$. Then a quadruplet $(f,u,p,b)$ of  nonnegative functions is said to be a \emph{solution} on the interval $(0, T)$ to the system (\ref{eq:seq1}-\ref{eq:seq4}) with the boundary condition \eqref{eq:boundary} and the initial data \eqref{eq:initial_f} and \eqref{eq:initial_upb}, if it satisfies, for any $\varphi \in \mathcal{C}^\infty_c\left([0,T]\times[x_0,+\infty)\right)$ and $t\in(0,T)$
\begin{multline*}
\int_{x_0}^{+\infty} f(x,t)\varphi(x,t) dx  = \int_{x_0}^{+\infty} \fin(x)\varphi(x,0) dx + \int_0^t N(u(s))\varphi(x_0,s) ds \hfill \\
\hfill  + \int_0^t\int_{x_0}^{+\infty} f(x,s)\left[ \frac{\partial}{\partial t}\varphi(x,s) + u(s) \rho(x) \frac{\partial}{\partial x}\varphi(x,s) - \mu(x) \varphi(x,s)\right] dx ds ,
\end{multline*}
and
\begin{align*}
& u(t) = \uin + \int_0^t \left[\lambda_u  - \gamma_u u -\tau u p + \sigma b - x_0 N(u) -  u \int_{x_0}^{+\infty} \rho(x) f(x,s) dx \right] ds, \\
& p(t) = \pin + \int_0^t \left[\lambda_p - \gamma_p p -\tau u p + \sigma b \right] ds, \\
& b(t) = \bin + \int_0^t \left[ \tau u p - (\sigma + \delta) b \right] ds,
\end{align*}
with the regularity
$f\in L^\infty\left(0,T;L^1\left(x_0,+\infty; x dx\right)\right)$ and  $u,p,b \in C^0(0,T)$.
\end{definition}
\begin{theorem}[Well-posedness] \label{thm:wellposed}
Let $\fin$ be a nonnegative function satisfying (H1), let $\uin$, $\pin$ and $\bin$ be nonnegative real numbers, and assume hypothesis (H2) to (H4). Let $T>0$. There exists a unique nonnegative solution $(f,u,p,b)$ to (\ref{eq:seq1}-\ref{eq:seq4}) with \eqref{eq:boundary}  and initial conditions given by \eqref{eq:initial_f} and \eqref{eq:initial_upb}, in the sense of definition \ref{def:exist}, such that
$f\in C^0\left([0,T],L^1(x_0,+\infty;x^r dx)\right)$ for every $r\in [0,1]$,
and $u,p,b \in C^1_b(0,T)$.
\end{theorem}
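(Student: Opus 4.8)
The plan is to decouple the size-structured transport equation \eqref{eq:seq1} from the three ordinary differential equations and to close the system by a fixed-point argument. Given a nonnegative $u \in C^0([0,T])$, I would first solve \eqref{eq:seq1} together with the boundary condition \eqref{eq:boundary} by the method of characteristics, obtaining $f = f[u]$. The characteristic curves $t \mapsto X(t)$ solve $\dot X = u(t)\rho(X)$; since (H2) gives $\rho \in W^{2,\infty}$ with the linear bound $\rho(x) \le Cx$ noted after the hypotheses, these curves are globally defined and foliate the domain, each point being connected either to the initial line $\{t=0\}$ or to the boundary $\{x=x_0\}$. Along each characteristic $f$ satisfies the linear ODE $\dot f = -\big(\mu(X) + u\,\rho'(X)\big)f$, so that $f[u]\ge 0$ is given by a Duhamel formula with the initial datum $\fin$ feeding the interior characteristics and the nucleation rate $N(u)$ injecting new plaques at size $x_0$. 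Evaluated against the test functions of Definition \ref{def:exist}, this candidate is a solution of the weak formulation for each fixed $u$, and division by $\rho(x_0)$ is avoided.

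Next I would establish the a priori bounds that make the scheme global. Nonnegativity of $u,p,b$ follows from the quasi-positivity of the right-hand sides, exactly as in Proposition \ref{prop:exist}: each component's equation has a nonnegative source when that component vanishes. For boundedness on $[0,T]$ I would use the two formal balance laws of Section \ref{ssec:model}, now read off from the integral equations: the prion balance gives $\frac{d}{dt}\big(p+b\big) = \lambda_p - \gamma_p p - \delta b \le \lambda_p$, so $p$ and $b$ grow at most linearly and are bounded on $[0,T]$, while the oligomer balance gives $\frac{d}{dt}\big(u+b+\int_{x_0}^{\infty} x f\,dx\big) \le \lambda_u$, which bounds $u$ and the mass $M=\int_{x_0}^{\infty} x f\,dx$ on $[0,T]$. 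Because $\rho(x)\le Cx$, this mass bound controls the only genuinely nonlinear coupling term, $u\int_{x_0}^\infty \rho f\,dx \le C\,u\,M$, and the bound $N(w)\le K_M w$ from (H3) controls the nucleation term. Propagation of the weighted norms $\int_{x_0}^\infty x^r f\,dx$ for $r\in[0,1]$ is obtained the same way, by testing the transport equation against $x^r$ and invoking the linear growth of $\rho$.

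The heart of the argument is the contraction estimate. I would define the map $\Gamma$ sending $(u,p,b)$ to the new triple obtained by inserting $f[u]$ into the integral equations of Definition \ref{def:exist}, and show that $\Gamma$ is a contraction on a ball of $C^0([0,\tau];\R^3)$ for $\tau$ small. The delicate point, and what I expect to be the main obstacle, is estimating $\|f[u_1]-f[u_2]\|_{L^1(x_0,\infty;\,x\,dx)}$ by $\|u_1-u_2\|_{C^0}$: since the velocity field $u\rho$ depends on $u$, the two densities are carried along \emph{different} families of characteristics, so one must compare the two flow maps. This is precisely where the $W^{2,\infty}$ regularity of $\rho$ is used — it makes the flow $X(t;\cdot)$ Lipschitz in its initial position, with a controlled Jacobian, and Lipschitz with respect to the driving function $u$, so that the change of variables in the weighted $L^1$ integral produces a factor $e^{C\tau}-1 = O(\tau)$ multiplying $\|u_1-u_2\|_{C^0}$. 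Combined with the Lipschitz dependence of $N$ from (H3) and of the bilinear terms on the a priori bounded set, this yields a Lipschitz constant $L(\tau)\to 0$ as $\tau\to 0$.

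Finally, the Banach fixed-point theorem gives a unique solution on a short interval $[0,\tau]$ whose length depends only on the a priori bounds, which are uniform on $[0,T]$; the local solution is therefore continued step by step up to the arbitrary final time $T$, yielding global existence and uniqueness. The continuity $f\in C^0\big([0,T];L^1(x_0,\infty;x^r dx)\big)$ for every $r\in[0,1]$ then follows from the explicit characteristic representation together with the continuity of $t\mapsto u(t)$ and a dominated-convergence argument using the propagated weighted bounds, while $u,p,b\in C^1_b(0,T)$ is read off directly from their integral equations once $f$, and hence the right-hand sides, are known to be continuous and bounded.
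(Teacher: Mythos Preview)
Your proposal is correct and follows essentially the same architecture as the paper: solve the autonomous transport problem for $f$ by characteristics given $u$ (the paper's Proposition~\ref{lem:auto}), feed the resulting $f[u]$ into an integral operator on $(u,p,b)$, apply Banach's fixed point on a short interval (the paper's Lemma~\ref{lem:contraction}), and continue globally via an a priori bound. Two minor technical deviations are worth noting: the paper obtains the $L^1(x\,dx)$ stability estimate $\|f[u_1]-f[u_2]\|$ not by comparing the two characteristic flows as you suggest but by a Kruzhkov-type argument on the weak formulation (regularized sign function, Proposition~\ref{prop:contract}), and for the global a priori bound it uses the simpler inequality $\tfrac{d}{dt}(u+p+2b)\le\lambda-m(u+p+2b)$, which does not involve the mass $\int x f\,dx$ at all, rather than the full oligomer balance you invoke.
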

The proof of the theorem \ref{thm:wellposed} is decomposed into two parts. First,  we study the initial boundary value problem
\begin{align}
 &\frac{\partial}{\partial t} f(x,t) + u(t)\frac{\partial}{\partial x}\big[ \rho(x)f(x,t) \big] = -\mu(x)f(x,t) \quad  \text{on } (x_0,+\infty)\times(0,+\infty), \label{eq:transport} \\
 & \vphantom{\frac{\partial}{\partial t}} u(t)\rho(x_0)f(x_0,t) = N(u(t)), \quad \text{on } (0,+\infty), \label{eq:bdry}\\
 &\vphantom{\frac{\partial}{\partial t}} f(\cdot,t=0) = \fin,\quad \text{on }(x_0,+\infty). \label{eq:initial_f_auto}
\end{align}
We prove in the subsection \ref{subsec:autonomous} the following proposition:
\begin{proposition} \label{lem:auto}
Let $u\in\mathcal C^0_b(\mathbb R_+)$, let $\fin$ satisfy (H1), and assume hypothesis (H2) to (H3). For any $T>0$, there exists a unique nonnegative solution $f$ to (\ref{eq:transport}-\ref{eq:initial_f_auto}) in the sense of distributions, such that
$f\in C^0\left([0,T],L^1(x_0,+\infty;x^r dx)\right)$ for every $r\in [0,1]$.

\end{proposition}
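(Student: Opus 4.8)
The plan is to solve \eqref{eq:transport}--\eqref{eq:initial_f_auto} explicitly by the method of characteristics, treating $u\in\mathcal C^0_b(\mathbb R_+)$ as a given datum so that the problem is \emph{linear} in $f$. I would first introduce the characteristic flow $\Phi(t;s,y)$ defined by $\partial_t\Phi(t;s,y)=u(t)\,\rho(\Phi(t;s,y))$ with $\Phi(s;s,y)=y$. Since (H2) makes $\rho$ Lipschitz and $u$ is continuous and bounded, Cauchy--Lipschitz yields a unique global flow; because $u\rho\geq0$ the trajectories are nondecreasing in $t$ and stay in $[x_0,+\infty)$, and $y\mapsto\Phi(t;s,y)$ is increasing with Jacobian $\partial_y\Phi(t;s,y)=\exp\!\big(\int_s^t u\rho'(\Phi)\,d\sigma\big)$. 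The single curve $\xi(t):=\Phi(t;0,x_0)$ separates the quarter-plane into points fed by the initial data ($x>\xi(t)$, tracing back to $Y(t,x):=\Phi(0;t,x)>x_0$) and points fed by the boundary ($x_0<x<\xi(t)$, entering at a unique time $\tau(t,x)\in(0,t)$ with $\Phi(t;\tau,x_0)=x$).

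A convenient reduction is to set $g:=\rho f$, which removes the $\rho'$ term and turns \eqref{eq:transport} into $\partial_t g+u\rho\,\partial_x g=-\mu g$; along characteristics this integrates to $g(t,\Phi(t))=g(\mathrm{start})\exp(-\int\mu)$. This yields the explicit representation $f(t,x)=\fin(Y)\exp(-\int_0^t[\mu+u\rho'](\Phi(s;t,x))\,ds)$ for $x>\xi(t)$, and $f(t,x)=\tfrac{N(u(\tau))}{u(\tau)\rho(x_0)}\exp(-\int_\tau^t[\mu+u\rho'](\Phi(s;\tau,x_0))\,ds)$ for $x_0<x<\xi(t)$. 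Nonnegativity is immediate since $\fin\geq0$, $N\geq0$ and the exponentials are positive; where $u(\tau)=0$ the quotient $N(u)/u$ is interpreted via $N(0)=0$ and the local Lipschitz bound $N(w)\leq K_M w$ from (H3), which also keeps the boundary contribution bounded. For the weighted bounds I would use $\rho(x)\leq Cx$ (noted after (H2)) and Gronwall to get $\Phi(t;0,y)\leq y\,e^{C\|u\|_\infty T}$; changing variables $x\mapsto Y$ (resp. $x\mapsto\tau$) then shows $\int x^r f(t,x)\,dx$ is finite for every $r\in[0,1]$, uniformly on $[0,T]$, and continuity of the flow in $t$ together with dominated convergence upgrades this to $f\in C^0([0,T],L^1(x_0,\infty;x^r dx))$.

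It then remains to verify that this $f$ satisfies the distributional formulation and that it is the only one. Existence in the weak sense follows by inserting the representation into the weak form and undoing the change of variables, the boundary term producing exactly $\int_0^t N(u(s))\varphi(x_0,s)\,ds$. For uniqueness I would argue by duality: the difference $w$ of two solutions solves the homogeneous problem with zero data (the $N(u)$ terms cancel), and for arbitrary $\psi\in\mathcal C_c^\infty$ one solves the backward adjoint equation $-\partial_t\varphi-u\rho\,\partial_x\varphi+\mu\varphi=0$ with $\varphi(T,\cdot)=\psi$ along the same characteristics; using $\varphi$ as a test function forces $\int w(T)\psi=0$, whence $w\equiv0$. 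I expect the main obstacle to be precisely this uniqueness step, together with the careful bookkeeping at the boundary $x=x_0$: handling the change of regime across $\xi(t)$, the interpretation of the incoming flux when $u$ vanishes, and the control of the time-boundary terms in the duality pairing so that the adjoint test functions are admissible on the unbounded domain.
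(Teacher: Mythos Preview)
Your overall architecture---characteristics, an explicit mild formula split by the separating curve $\xi(t)=\Phi(t;0,x_0)$, change of variables to get the weighted $L^1$ bounds, then verification of the weak formulation---is exactly what the paper does. Two points deserve comment.

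\emph{Continuity in $C^0([0,T],L^1(x^r dx))$.} Your sentence ``continuity of the flow in $t$ together with dominated convergence'' is too quick. Dominated convergence gives you continuity of the \emph{moments} $t\mapsto\int x^r f(t,x)\,dx$ (after the change of variables $y=Y(t,x)$ the integrand depends continuously on $t$ and is dominated), but strong $L^1$ continuity requires controlling $\int x^r\,|\fin(Y(t+\delta t,x))-\fin(Y(t,x))|\,J\,dx$, and since $\fin$ is merely in $L^1(x\,dx)$ the integrand need not converge pointwise. The paper handles this by approximating $\fin$ by $\fin_\varepsilon\in\mathcal C^\infty_c$ (and $u$ by $u^\varepsilon$ for the boundary part), splitting the difference into an $\varepsilon$-piece and a $\delta t$-piece, and sending $\delta t\to0$ first, then $\varepsilon\to0$. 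Your argument can be repaired the same way; just be aware that naked DCT does not close it.

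\emph{Uniqueness.} Here your route genuinely differs from the paper's. You propose a duality argument (solve the backward adjoint along characteristics and test the difference against it). This works in principle, but the adjoint solution is only as regular as $u$, $\rho$, $\mu$ allow---in particular not $\mathcal C^\infty$ since $u$ is merely continuous---so you still need an approximation step to feed it into the weak formulation, and some care at $x=x_0$. The paper instead proves an $L^1(x\,dx)$ \emph{stability estimate} (their Proposition~\ref{prop:contract}): for two data $u_1,u_2$ and two mild solutions $f_1,f_2$, one regularises the sign function, tests against $S_\delta(f_1-f_2)\,g(x)$, and passes to the limit to obtain a Gronwall-type bound on $\int x|f_1-f_2|\,dx$ in terms of $\|u_1-u_2\|_\infty$. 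Uniqueness is then the special case $u_1=u_2$. The payoff of the paper's route is that this very estimate is reused in the contraction argument for the full coupled system (their Lemma~\ref{lem:contraction}); your duality proof would give uniqueness but not the Lipschitz dependence on $u$ that the fixed-point step needs, so you would have to prove that separately anyway.
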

\noindent
The proof is in the spirit of the proof in \cite{Collet2000} for the Lifshitz-Slyozov equation. It consists of a proof based on the concept of a mild solution in the sense of distributions, with the additional requirement of continuity from time into $L^1(xdx)$ space.

The second step of the proof of theorem \ref{thm:wellposed} is performed in subsection \ref{subsec:proof}. Precisely, once we have the existence of a unique density $f$, when $u$ is  given, we are able to construct the operator
\begin{equation} \label{eq:operator}
\begin{array}{rcccl}
\ds S & : & C^0([0,T])^3 & \ds \mapsto & \ds C^0([0,T])^3  \vspace{3mm} \\
& & \ds (u,p,b) & \ds \mapsto &  \ds ( S_u , S_p, S_b) = S(u,p,b),
\end{array}
\end{equation}

\begin{equation*}
\begin{array}{rcl}
 \ds S_u & = & \ds \uin + \int_0^t \left[\lambda_u  - \gamma_u u -\tau u p + \sigma b - x_0 N(u) - u \int_{x_0}^{+\infty} \rho(x) f(x,s) dx \right] ds,\vspace{2mm}\\
 \ds S_p & = & \ds \pin + \int_0^t \left[\lambda_p - \gamma_p p -\tau u p + \sigma b \right] ds, \vspace{2mm} \\
 \ds S_b & = & \ds \bin + \int_0^t \left[ \tau u p - (\sigma + \delta) b \right] ds,
\end{array}
\end{equation*}
where $f$ is the unique solution associated to $u$ given by proposition \ref{lem:auto}. Then, theorem \ref{thm:wellposed} is finally proven in subsection \ref{subsec:proof} applying the Banach fixed point theorem to the operator $S$.

\subsection[The autonomous problem]{Existence of a solution to the autonomous problem}\label{subsec:autonomous}

In the following we let $u\in\mathcal C^0_b(\mathbb R_+)$ and we use the notations
$a(x,t) = u(t) \rho(x)$ and $c(x,t) = - u(t) \rho'(x)$ for \;every $(x,t)\in [x_0,+\infty)\times\mathbb R_+$.
From (H2) and noting that $\rho(x)\leq C x$, we have for any $t>0$
\begin{align}
& a(t,x) \leq A x,  \text{ for } x>x_0, \label{eq:a_slin}\\
& | a(t,x) - a(t,y)| \leq A |x - y|,  \text{ for } x,y>x_0,\label{eq:a_lip}\\
& |c(t,x) | \leq B, \label{eq:B}
\end{align}
where $A = \max\left( C \|u\|_{L^\infty},\|u\|_{L^\infty} \|\rho'\|_{L^\infty}\right)$  and $B =  \|u\|_{L^\infty} \|\rho'\|_{L^\infty(x_0,+\infty)}$. In order to establish the mild formulation of the problem, we define the characteristic which reaches $x\geq x_0$ at time $t\geq 0$, that is, the  solution to
\begin{equation} \label{eq:charact}
\begin{aligned}
 & \frac{d}{ds}X(s;x,t) = a(t,X(s;x,t)),\\
 & \vphantom{\frac{d}{ds}} X(t;x,t) = x.
\end{aligned}
\end{equation}
From property \eqref{eq:a_lip}, their exists a unique characteristic that reaches $(x,t)$.We note that it makes sense  as long as $X(s;x,t)\geq x_0$. Thus, we define the starting time of the characteristic as
\begin{equation*} \label{eq:exit}
 s_0(x,t) := \inf \left\{ s \in [0,t] : X(s;x,t) \geq x_0 \right\}.
\end{equation*}
The characteristic will be defined for any time $s\geq s_0$ and takes its origin from the initial or the boundary condition, respectively, if $s_0 = 0 $ or $s_0>0$. We recall the classical properties of these characteristics
\begin{align*}
& X(s;X(\sigma;x,t),\sigma) = X(s;x,t) \\
& J(s;x,t) := \frac{\partial}{\partial x} X(s;x,t) = \exp\left(\int_s^t c(\sigma,X(\sigma;x,t)) d\sigma \right)\\
& \frac{\partial}{\partial t} X(s;x,t) = - a(t,x) J(s;x,t).
\end{align*}
Also, remarking that $s_0(X(t;x_0,0),t) = 0$, then by monotonicity and continuity of $X$ for any $t>0$, we get
$x\in (x_0,X(t;x_0,0)) \ \iff \ s_0(x,t) \in (0,t)$,
and for any $ x\in(x_0,X(t;x_0,0))$ we have $X(s_0(x,t);x,t)=x_0$. It follows that for every $x \in(x_0,X(t;x_0,0))$
\begin{equation*}
I(x,t)  := - \frac{\partial}{\partial x} s_0 (x,t) = J(s_0(x,t);x,t)/a(s_0(x,t),x_0).
\end{equation*}\
\noindent
Considering the derivative of $f(s,X(s;x,t))$ in $s$, and integrating  over $(s_0,t)$ we obtain the mild formulation of the problem. The mild solution is defined  for $a.e. \ (x,t)\in (x_0,+\infty)\times \mathbb R_+$ by
\begin{equation} \label{eq:mild}
f(x,t) = \left\{
	\begin{array}{l}
		\ds \fin(X(0;x,t)) J(0;x,t) \exp\left(- \int_{0}^t\mu(X(\sigma;x,t))d\sigma\right) \quad  x \geq X(t;x_0,0), \\
 		\ds N(u(s_0(x,t))) I(x,t) \exp\left(- \int_{s_0(x,t)}^t\mu(X(\sigma;x,t))d\sigma\right)\quad  x\in(x_0,X(t;x_0,0)).
	\end{array}\right.
\end{equation}
We infer from the formulation \eqref{eq:mild} that for \emph{a.e} $(x,t)\in[x_0,+\infty)\times \mathbb R_+$, $f$ is nonnegative, since $J$ and $I$ are nonnegative, and $\fin$ satisfies (H1).
We recall some useful properties that are derived in \cite{Collet2000}*{Lemma 1}.
\begin{lemma} \label{lem:carac}
Let $u \in \mathcal C^0_b(\mathbb R_+)$ be a given data and assume that (H2) holds. Then for any $x\geq x_0$ and $t>0$, as long as  the  characteristic curve $s\mapsto X(s;x,t)$ defined in \eqref{eq:charact} exists,  \emph{i.e.}, $s\geq s_0(x,t)$, we have
\begin{align*}
& \text{for }  s_1 \leq s_2 , \ X(s_1;x,t) \leq X(s_2;x,t) \leq X(s_1;x,t) e^{A(s_2-s_1)}\\
& \text{if } x_n \rightarrow +\infty, \text{ then for all } t \geq s \geq 0,\ X(s;x,t) \rightarrow +\infty\\
& \text{for } s \geq t, \ X(s;x,t) \leq x e^{A(s-t)}.
\end{align*}
\end{lemma}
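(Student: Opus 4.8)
The plan is to deduce all three estimates from two structural facts about the velocity driving \eqref{eq:charact}, namely $a=u\rho$: it is nonnegative (since $u\ge0$ and $\rho\ge0$ by (H2)), and it grows at most linearly, $a\le Ax$ for $x>x_0$, by \eqref{eq:a_slin}. Both facts hold uniformly in the time argument, so the whole argument reduces to one-sided differential inequalities along the characteristic together with Gronwall's lemma. As a preliminary I would note that the Lipschitz bound \eqref{eq:a_lip} gives local existence and uniqueness of $X(\cdot;x,t)$, while the linear bound $a\le Ax$ rules out finite-time blow-up; hence the characteristic is defined for all $s\ge s_0(x,t)$, which is exactly the range where $X(s;x,t)\ge x_0$ and therefore where \eqref{eq:a_slin} is applicable.

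For the first property, nonnegativity of the velocity gives $\tfrac{d}{ds}X(s;x,t)\ge0$, so $s\mapsto X(s;x,t)$ is nondecreasing; this is the left inequality $X(s_1;x,t)\le X(s_2;x,t)$ for $s_1\le s_2$. For the right inequality I would use \eqref{eq:a_slin} to turn \eqref{eq:charact} into the differential inequality $\tfrac{d}{ds}X(s;x,t)\le A\,X(s;x,t)$ on $[s_1,s_2]$ (legitimate because $X\ge x_0>0$ there), and integrate by Gronwall to obtain $X(s_2;x,t)\le X(s_1;x,t)\,e^{A(s_2-s_1)}$.

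The third property is then immediate: taking $s_1=t$ and $s_2=s\ge t$ in the bound just proved, together with $X(t;x,t)=x$, yields $X(s;x,t)\le x\,e^{A(s-t)}$. For the second property I would invert the same exponential comparison in the backward direction: for $s\le t$ the right inequality with $s_1=s$, $s_2=t$ reads $x=X(t;x,t)\le X(s;x,t)\,e^{A(t-s)}$, whence $X(s;x,t)\ge x\,e^{-A(t-s)}\ge x\,e^{-At}$. If $x>x_0e^{At}$, this forces $X(s;x,t)>x_0$ for every $s\in[0,t]$, so $s_0(x,t)=0$ and the characteristic is defined on all of $[0,t]$; letting $x=x_n\to+\infty$ then gives $X(s;x_n,t)\ge x_n e^{-At}\to+\infty$, uniformly in $s\in[0,t]$.

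The only point requiring care, and the closest thing to an obstacle, is matching the domain of validity of the sublinear bound \eqref{eq:a_slin}, which holds only for $x>x_0$, with the interval on which the characteristic is actually used. This is why I restrict each differential inequality to $s\ge s_0(x,t)$, where $X(s;x,t)\ge x_0$ is guaranteed, and why for the second property I first verify that large initial height keeps the curve above $x_0$ all the way back to $s=0$ before invoking the lower bound. Everything else is a routine application of Gronwall's lemma to a monotone, at-most-linear velocity field.
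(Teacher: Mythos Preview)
Your argument is correct and is essentially the paper's own approach: the paper's proof merely cites \cite{Collet2000} and records the integral inequality
\[
X(s_2;x,t) = X(s_1;x,t) + \int_{s_1}^{s_2} a(s,X(s;x,t))\,ds \le X(s_1;x,t) + A\int_{s_1}^{s_2} X(s;x,t)\,ds,
\]
which is exactly the integrated form of your differential inequality $\tfrac{d}{ds}X\le AX$, after which Gronwall gives all three statements just as you derive them. Your treatment is in fact more explicit than the paper's, particularly in spelling out the lower bound $X(s;x,t)\ge x\,e^{-A(t-s)}$ for the second property and in noting that the sublinear bound \eqref{eq:a_slin} is only invoked where $X\ge x_0$; the one tacit assumption you use, $u\ge0$ (hence $a\ge0$) for the monotonicity, is not stated in the lemma itself but is the standing convention throughout the section.
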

\begin{proof}
We refer to \cite{Collet2000}*{Lemma 1}, where the result follows from the fact that for any $x\geq x_0$, $t>0$ and $s_0(x,t)\leq s_1 \leq s_2 $, we have
\begin{equation*}
x_0 \leq X(s_2;x,t) = X(s_1;x,t) +  \int_{s_1}^{s_2} a(s,X(s;x,t)) d s \leq X(s_1;x,t) +  A \int_{s_1}^{s_2} X(s;x,t) d s,
\end{equation*}
where $A$ is given by \eqref{eq:a_slin}.
\end{proof}
In the sequel we will repeatedly refer to the changes of variables
$$y = X(0;x,t) \text{ over } x \in (X(t,x_0,0),+\infty), \text{ with Jacobian } J(0;x,t),$$
$$s = s_0(x,t) \text{ over } x \in (x_0,X(t;x_0,0)), \text{ with Jacobian } -I(x,t).$$
The first is a $\mathcal C^1$ - diffeomorphism from $(X(t,x_0,0),+\infty)$ into $(x_0,+\infty)$, and the second from $(x_0,X(t;x_0,0))$ into $(0,t)$.
Integrating $f$ defined by \eqref{eq:mild} over $(0,R)$ with $R>X(t;x_0,0)$, using the change of variables above, using lemma \ref{lem:carac}, and taking the limit $R\rightarrow +\infty$, we get
\begin{equation} \label{eq:intx}
\begin{aligned}
\int_{x_0}^{+\infty} x|f(t,x)| dx & \leq \int_{x_0}^{+\infty} X(t;y,0) |\fin(y)| dy + \int_0^t X(t;s,x_0) | N (u(s))| ds \\
& \leq e^{At} \left( \int_{x_0}^{+\infty} y |\fin(y)| dy + \int_0^t x_0 | N (u(s))| ds \right),
\end{aligned}
\end{equation}
where we have split the integral into two parts and uses both the previous changes of variables.  Thus,for any $T>0$, $f\in L^\infty\left(0,T; L^1(x_0,+\infty;x dx)\right)$, and therefore in $L^\infty\left(0,T; L^1(x_0,+\infty; x^r dx)\right)$, for any $r\in[0,1]$. In the next lemma we claim that $f$ defined by \eqref {eq:mild} is a weak solution.
\begin{lemma} \label{prop:weak}
Let $f$ be the mild solution defined by \eqref {eq:mild}. Then for any $t>0$
\begin{multline*}
\int_{x_0}^{+\infty} f(x,t)\varphi(x,t) dx  = \int_{x_0}^{+\infty} \fin(x)\varphi(x,0) dx + \int_0^t N(u(s))\varphi(x_0,s) ds \hfill \\
\hfill  + \int_0^t\int_{x_0}^{+\infty} f(x,s)\left[ \frac{\partial}{\partial t}\varphi(x,s) u(s) \rho(x) \frac{\partial}{\partial x}\varphi(x,s) - \mu(x) \varphi(x,s)\right] dx ds,
\end{multline*}
for all $\varphi \in \mathcal{C}^\infty_c([0,T]\times[x_0,+\infty))$.
\end{lemma}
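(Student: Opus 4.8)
The plan is to verify the weak identity directly from the explicit mild representation \eqref{eq:mild}, by transporting each integral onto the characteristics of \eqref{eq:charact}, where the integrand collapses to a total $s$-derivative. Fix $\varphi\in\mathcal C^\infty_c([0,T]\times[x_0,+\infty))$ with $\operatorname{supp}\varphi\subset[x_0,R]\times[0,T]$, and write the double integral on the right-hand side as
\[
I:=\int_0^t\!\!\int_{x_0}^{+\infty} f(x,s)\Big[\partial_s\varphi(x,s)+u(s)\rho(x)\,\partial_x\varphi(x,s)-\mu(x)\varphi(x,s)\Big]\,dx\,ds.
\]
The key observation is that $\partial_s\varphi+u\rho\,\partial_x\varphi$ is exactly the derivative of $\varphi$ along the characteristic curves $s\mapsto X(s;\cdot)$; once combined with the factor $-\mu\varphi$ and the exponential and Jacobian weights carried by $f$ in \eqref{eq:mild}, the whole bracket becomes a perfect derivative in $s$ after passing to characteristic coordinates.

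Concretely, I would split the strip $\{x>x_0,\ 0<s<t\}$ along the interface $x=X(s;x_0,0)$ into the part fed by the initial datum and the part fed by the boundary, and straighten each one. On the first I use $x=X(s;y,0)$, $y\in(x_0,+\infty)$: the Jacobian $\partial_yX(s;y,0)$ cancels the factor $J(0;x,s)$ in \eqref{eq:mild}, so $f(x,s)\,dx=\fin(y)\,e^{-\int_0^s\mu(X(\tau;y,0))\,d\tau}\,dy$ and the bracket reduces to $\fin(y)\,\tfrac{d}{ds}\big[\varphi(X(s;y,0),s)\,e^{-\int_0^s\mu\,d\tau}\big]$. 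On the second I use the emission time, $x=X(s;x_0,\sigma)$ with $0<\sigma<s$: here the Jacobian $\partial_\sigma X$ combines with the weight $I(x,s)=J(s_0;x,s)/a(s_0,x_0)$ so that $f(x,s)\,dx=N(u(\sigma))\,e^{-\int_\sigma^s\mu\,d\tau}\,d\sigma$ and the bracket reduces to $N(u(\sigma))\,\tfrac{d}{ds}\big[\varphi(X(s;x_0,\sigma),s)\,e^{-\int_\sigma^s\mu\,d\tau}\big]$. Applying Fubini and then the fundamental theorem of calculus in $s$ on each piece (over $(0,t)$ and over $(\sigma,t)$ respectively) produces four boundary contributions. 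The two endpoints $s=t$ reproduce, after the inverse changes of variables $y=X(0;x,t)$ and $\sigma=s_0(x,t)$, the two halves of $\int_{x_0}^{+\infty} f(x,t)\varphi(x,t)\,dx$; the endpoint $s=0$ gives $-\int_{x_0}^{+\infty}\fin(y)\varphi(y,0)\,dy$; and the endpoint $s=\sigma$, using $X(\sigma;x_0,\sigma)=x_0$, gives $-\int_0^tN(u(\sigma))\varphi(x_0,\sigma)\,d\sigma$. Rearranging is precisely the claimed weak formulation.

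The main obstacle is the rigorous justification of these steps for a merely $L^1$ solution. Hypothesis (H2) is what guarantees that the two maps above are $\mathcal C^1$-diffeomorphisms with the stated Jacobians — this is where $\rho\in W^{2,\infty}$ is used, so that $c=-u\rho'$ is Lipschitz and $J$ is well defined and continuous — and the Jacobian-weight cancellations together with the total-derivative identity must be checked termwise; this algebraic bookkeeping is the technical heart. The interchanges of order (Fubini) and of $\int dy$ with $\tfrac{d}{ds}$ are legitimate because $\varphi$ has compact support: only a bounded range of $x$, and hence by Lemma \ref{lem:carac} a bounded range of feet $y$ and emission times $\sigma$, contributes, on which $\fin\in L^1$ and $N(u(\cdot))$ is bounded and continuous, while the global control is furnished by \eqref{eq:intx}. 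Since the decomposition is made along entire characteristics, each characteristic lies in a single region and the interface $x=X(s;x_0,0)$ carries no spurious term, so the two pieces glue automatically. Should one wish to avoid differentiating weighted integrals of $L^1$ data, the same computation can be run first for smooth compactly supported $\fin$, where $f$ is classical and the identity follows by integrating \eqref{eq:transport} by parts, and then extended to general $\fin$ by density in $L^1(x_0,+\infty;x\,dx)$ using the stability bound \eqref{eq:intx}.
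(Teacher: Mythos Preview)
Your proposal is correct and follows essentially the same route as the paper: both split the double integral along the characteristic interface $x=X(s;x_0,0)$, pass to characteristic coordinates (foot $y$ on the initial-data piece, emission time $\sigma$ on the boundary piece), recognise the bracket as a total $s$-derivative of $\varphi\cdot e^{-\int\mu}$, and read off the boundary contributions. The only cosmetic difference is ordering: the paper first records the identity for $\int f(x,t)\varphi(x,t)\,dx$ separately (its equation \eqref{eq:I0}) and then matches it against the two halves of the double integral, whereas you recover that term directly as the $s=t$ endpoint; your added remarks on Fubini, compact support, and the optional density argument are justifications the paper leaves implicit.
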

\begin{proof}
Since $f$ belongs to  $f\in L^\infty\left(0,T; L^1(x_0,+\infty;x dx)\right)$, it is possible to multiply the mild solution $f$ against a test function $\varphi\in \mathcal{C}^\infty_c([0,T]\times[x_0,+\infty))$ and integrate over $(x_0,+\infty)$ to obtain
\begin{multline} \label{eq:I0}
\int_{x_0}^{+\infty} f(x,t) \varphi(x,t) dx = \int_{x_0}^{+\infty} \fin(y) \varphi(X(t;y,0))e^{- \int_{0}^t\mu(X(\sigma;y,0))d\sigma}dy\hfill\\
\hfill - \int_0^t N (u(s)) \varphi(X(t;x_0,s),t) e^{ - \int_{s}^t\mu(X(\sigma;x_0,s))d\sigma} ds,
\end{multline}
by the same change of variable made above for \eqref{eq:intx}. Furthermore, we have
\begin{align} \label{eq:I1}
\int_0^t & \int_{x_0}^{X(s;x_0,0)} f(x,s)  \left[\partial_t \varphi(x,s) + a(s,x)\partial_x \varphi(x,s) -\mu(x) \varphi(x,s)\right] dx ds \nonumber \\
& = \int_0^t \int_{x^0}^{+\infty} \fin(x) \frac{d}{ds} \left( \varphi(X(s;x,0),s)e^{- \int_{0}^s\mu(X(\sigma;x,0))d\sigma} \right) dy ds \nonumber\\
& = \int_{x_0}^{+\infty} \fin(x) \varphi(X(t;x,0),t)e^{- \int_{0}^t \mu(X(\sigma;y,0))d\sigma}  dx   - \int_{x_0}^{+\infty} \fin(x) \varphi(x,0) dx,
\end{align}
still using the change of variable mentioned above and
\begin{align} \label{eq:I2}
\int_0^t & \int_{X(s;x_0,0)}^{\infty} f(x,s)  \left[\partial_t \varphi(x,s) + a(s,x)\partial_x \varphi(x,s) -\mu(x) \varphi(x,s)\right] dx  ds \nonumber \\
& =  -\int_0^t \int_{0}^{s} N(u(z)) \frac{d}{d s} \left( \varphi(X(s;x_0,z),s)e^{- \int_{z}^s\mu(X(\sigma;x_0,z))d\sigma} \right) dz ds \nonumber\\
& =  - \int_0^t  N(u(s))  \varphi(X(t;x_0,s),t)e^{- \int_{s}^t\mu(X(\sigma;x_0,s))d\sigma} dz ds  -  \int_0^t N(u(s)) \varphi(x_0,s) ds.
\end{align}
Finally, combining \eqref{eq:I0}, \eqref{eq:I1} and \eqref{eq:I2} we obtain that $f$ is a weak solution.
\end{proof}
The aim of the following lemma is to prove that the moments of $f$ less than 1 are continuous in time.
\begin{lemma} \label{prop:continuity}
Let hypothesis (H1) to (H3) hold. Let $f$ be the mild solution given by \eqref{eq:mild}. Then for any $T>0$,
\begin{equation*}
f\in C^0\left([0,T],L^1(x_0,+\infty;x^r dx)\right), \quad \mathrm{\;for \;every\;} r\in [0,1].
\end{equation*}
\end{lemma}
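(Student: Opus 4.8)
The plan is to work directly from the explicit mild representation \eqref{eq:mild} and to reduce the $L^1(x^r\,dx)$-continuity of $t\mapsto f(\cdot,t)$ to two ingredients: continuity of the scalar map $t\mapsto \int_{x_0}^{+\infty} x^r f(x,t)\,dx$, together with almost-everywhere convergence of $f(\cdot,t')$ to $f(\cdot,t)$ as $t'\to t$. Since $f\ge 0$, Scheffé's lemma then upgrades these two facts to strong $L^1(x^r\,dx)$-convergence. First I would transport the two branches of \eqref{eq:mild} onto $t$-independent domains by the two changes of variables recalled just before the lemma ($y=X(0;x,t)$ on $x>X(t;x_0,0)$ and $s=s_0(x,t)$ on $x\in(x_0,X(t;x_0,0))$), exactly as in the derivation of \eqref{eq:intx}. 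This yields the identity
\begin{multline*}
\int_{x_0}^{+\infty} x^r f(x,t)\,dx = \int_{x_0}^{+\infty} X(t;y,0)^r\,\fin(y)\, e^{-\int_0^t \mu(X(\sigma;y,0))\,d\sigma}\,dy \\ + \int_0^t X(t;x_0,s)^r\, N(u(s))\, e^{-\int_s^t \mu(X(\sigma;x_0,s))\,d\sigma}\,ds.
\end{multline*}
Both integrands on the right are continuous in $t$ for fixed $y$ (resp. $s$) by continuous dependence of the flow \eqref{eq:charact} on time, and are dominated using Lemma~\ref{lem:carac} (so that $X(t;y,0)\le y e^{AT}$ and $X(t;x_0,s)\le x_0 e^{AT}$), the elementary bound $y^r\le x_0^{\,r-1}y$ valid for $r\in[0,1]$ and $y\ge x_0$, hypothesis (H1) (so $y\fin\in L^1$), and $N(u(s))\le K_M\|u\|_{L^\infty}$ with $M=\|u\|_{L^\infty}$ from (H3). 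Dominated convergence then gives continuity of $t\mapsto \int_{x_0}^{+\infty} x^r f(x,t)\,dx=\|f(\cdot,t)\|_{L^1(x^r dx)}$.

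Next I would settle the pointwise convergence, and hence the whole statement, for continuous initial data, say $\fin\in\mathcal C_c([x_0,+\infty))$. For such data, fixing $t$ and letting $t'\to t$, the interface $x=X(t';x_0,0)$ meets any fixed $x$ at only one time, so for a.e. $x$ the point stays strictly in one of the two branches of \eqref{eq:mild} for all $t'$ near $t$; on that branch $f(x,t')$ is continuous in $t'$, because the flow, the Jacobians $J$ and $I$, the entry time $s_0$ and the exponential factors depend continuously on $t'$, and $\fin$ being continuous ensures $\fin(X(0;x,t'))\to\fin(X(0;x,t))$. Thus $f(x,t')\to f(x,t)$ for a.e. $x$, and combined with the norm continuity above, Scheffé's lemma gives $\|f(\cdot,t')-f(\cdot,t)\|_{L^1(x^r dx)}\to 0$. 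Hence $f\in C^0([0,T];L^1(x^r dx))$ whenever $\fin$ is continuous.

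The main obstacle is exactly the passage to a general $\fin\in L^1(x\,dx)$: the pointwise step fails because $\fin(X(0;x,t'))$ need not converge for merely integrable $\fin$. I would circumvent this by a density argument resting on a uniform-in-time Lipschitz stability estimate. Since the datum $u$ — and therefore the characteristics $X$, the factors $J,I,s_0$, the interface $X(t;x_0,0)$, and the boundary contribution $N(u(\cdot))$ — is left unchanged when only $\fin$ is varied, two mild solutions $f,\tilde f$ with data $\fin,\tilde\fin$ coincide on $(x_0,X(t;x_0,0))$ and differ on the complementary branch only through $\fin-\tilde\fin$. Transporting by $y=X(0;x,t)$ as before gives, for every $t\in[0,T]$,
\begin{equation*}
\|f(\cdot,t)-\tilde f(\cdot,t)\|_{L^1(x^r dx)} \le \int_{x_0}^{+\infty} X(t;y,0)^r\,|\fin(y)-\tilde\fin(y)|\,dy \le e^{ArT}x_0^{\,r-1}\,\|\fin-\tilde\fin\|_{L^1(x dx)}.
\end{equation*}
Choosing $\fin_n\in\mathcal C_c([x_0,+\infty))$ with $\fin_n\to\fin$ in $L^1(x\,dx)$, the associated solutions $f_n$ lie in $C^0([0,T];L^1(x^r dx))$ by the previous paragraph, and the estimate shows $f_n\to f$ uniformly on $[0,T]$ in $L^1(x^r dx)$. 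As a uniform limit of continuous Banach-space-valued maps is continuous, we conclude $f\in C^0([0,T];L^1(x^r dx))$ for every $r\in[0,1]$, which is the claim.
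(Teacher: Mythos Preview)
Your proof is correct and takes a genuinely different route from the paper's. The paper estimates $\int_{x_0}^{+\infty} x^r|f(x,t+\delta t)-f(x,t)|\,dx$ directly: it splits the domain at the two interface positions $X(t;x_0,0)$ and $X(t+\delta t;x_0,0)$, regularizes both $\fin$ (by $\fin_\varepsilon\in\mathcal C^\infty_c$) and $u$ (by $u^\varepsilon$), and bounds each of the resulting pieces by hand using the flow estimates of Lemma~\ref{lem:carac} and the Lipschitz/boundedness properties of $J$, $I$, $\mu$, $\rho'$. The outcome is an inequality of the form $C^1(T,\varepsilon)+C^2(T,\delta t,\varepsilon)$, and one concludes by taking $\limsup_{\delta t\to 0}$ then $\varepsilon\to 0$.

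Your argument replaces this hands-on difference estimate by a three-step strategy: (i) continuity of the \emph{norm} $t\mapsto\|f(\cdot,t)\|_{L^1(x^rdx)}$ via the change of variables to the Lagrangian picture and dominated convergence; (ii) for continuous compactly supported data, pointwise a.e.\ convergence of $f(\cdot,t')$ to $f(\cdot,t)$ along each branch of \eqref{eq:mild}; (iii) Scheff\'e's lemma to upgrade (i)+(ii) to $L^1(x^rdx)$-convergence; and finally a density argument based on your uniform-in-time stability bound $\|f(\cdot,t)-\tilde f(\cdot,t)\|_{L^1(x^rdx)}\le e^{ArT}x_0^{\,r-1}\|\fin-\tilde\fin\|_{L^1(xdx)}$. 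This is more conceptual and noticeably shorter: the nonnegativity of $f$ lets Scheff\'e do the heavy lifting that the paper carries out explicitly, and the stability estimate (which is the linear, same-$u$ special case of the paper's Proposition~\ref{prop:contract}) cleanly isolates the dependence on the initial datum. The paper's approach, in turn, does not rely on the sign of $f$ and would survive unchanged for signed data; it also keeps everything at the level of integrals, sidestepping the mild technicalities around pointwise continuity of $s_0(x,\cdot)$ and $I(x,\cdot)$ on the boundary branch.
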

\begin{proof}
Let $T>0$ and $r\in[0,1]$, since $f\in L^\infty_{loc}\left(\mathbb R_+, L^1(x_0,+\infty; x^r dx)\right)$, we have for any $t>0$ and $\delta t > 0$ such that $t+\delta t \leq T$
\begin{equation*}
\int_{x_0}^{+\infty} x^r \left|f(x,t+\delta t) - f(x,t)\right| dx   =  I_1 + I_2 + I_3,
\end{equation*}
where
\begin{align*}
& I_1 = \int_{x_0}^{X(t;x_0,0)} x^r \left|f(x,t+\delta t) - f(x,t)\right| dx, \\
& I_2 = \int_{X(t;x_0,0)}^{X(t+\delta t;x_0,0)} x^r \left|f(x,t+\delta t) - f(x,t)\right| dx, \\
& I_3 = \int_{X(t+\delta t;x_0,0)}^{+\infty} x^r \left|f(x,t+\delta t) - f(x,t)\right| dx.
\end{align*}
Our goal is to prove that each term goes to zero when $\delta t$ goes to zero.  We first bound $I_3$, which results from the initial condition, since for $x \geq X(t+\delta t;x_0,0) \geq X(t;x_0,0)$, it  follows that
\begin{multline*}
I_3 = \int_{X(t+\delta t;x_0,0)}^{+\infty} x^r \left|\fin(X(0;x,t+\delta t))J(0;x,t+\delta t)e^{-\int_{0}^{t+\delta t} \mu(X(\sigma;x,t+\delta t))d\sigma} \right. \hfill \\
\hfill \left. - \fin(X(0;x,t))| J(0;x,t)e^{-\int_{0}^t \mu(X(\sigma;x,t))d\sigma}\right| dx.
\end{multline*}
Let $\fin_\varepsilon \, \in \, \mathcal C^\infty_0$ with compact support $supp (\fin_\varepsilon) \subset(0,R_\varepsilon)$ and converge in $L^1([x_0,+\infty),xdx)$ to $\fin$. We write $I_3$ as follows
\begin{equation}
 I_3 = I_3^1 + I_3^2 + I_3^3,
\end{equation}
where
\begin{equation*}
\begin{array}{rcl}
 \ds I_3^1 & = & \ds \int_{X(t+\delta t;x_0,0)}^{+\infty} x^r \big| \fin(X(0;x,t+\delta t))  - \fin_\varepsilon(X(0;x,t+\delta t))\big| \\
 & & \ds \phantom{\int_{X(t+\delta t;x_0,0)}^{+\infty} x^r} \, \times \,  J(0;x,t+\delta t) e^{-\int_{0}^{t+\delta t} \mu(X(\sigma;x,t+\delta t))d\sigma}  dx,  \\
 \ds I_3^2 & = & \ds \int_{X(t+\delta t;x_0,0)}^{+\infty} x^r  \big| \fin_\varepsilon(X(0;x,t+\delta t)) J(0;x,t+\delta t)
 \\
 & &
 \hspace{.5in} \times \, \,
 e^{-\int_{0}^{t+\delta t} \mu(X(\sigma;x,t+\delta t))d\sigma}  \\
& & \ds \phantom{ \int_{X(t+\delta t;x_0,0)}^{+\infty} x^r}  \, - \fin_\varepsilon(X(0;x,t))J(0;x,t)e^{-\int_{0}^t \mu(X(\sigma;x,t))d\sigma} \big| dx , \\
\ds  I_3^3 & = & \ds \int_{X(t+\delta t;x_0,0)}^{+\infty} x^r   | \fin_\varepsilon (X(0;x,t))  - \fin(X(0;x,t))|\\
& & \ds \phantom{\int_{X(t+\delta t;x_0,0)}^{+\infty} x^r  } \, \times \, J(0;x,t) e^{-\int_{0}^t \mu(X(\sigma;x,t))d\sigma} dx.
\end{array}
\end{equation*}
%
%
Dropping the exponential term, which is bounded by one, and changing of variables $y=X(0;x,t+\delta t)$ in $I^1_3$  and $y = X(0;x,t)$ in $I_3^3$,  we get
\begin{equation} \label{eq:estimI3_1}
I^1_3 + I^3_3 \leq 2e^{AT} \int_{x_0}^{+\infty} y^r | \fin(y)  - \fin_\varepsilon(y)| d y = C_3^1(T,\varepsilon),
\end{equation}
with the help of lemma \ref{lem:carac}. Next we bound $I_3^2$ by
\begin{equation*}
\begin{array}{rcl}
\ds I_3^2  & \leq & \ds \int_{X(t+\delta t;x_0,0)}^{+\infty} x^r  | \fin_\varepsilon(X(0;x,t+\delta t))  - \fin_\varepsilon(X(0;x,t)) | J(0;x,t+\delta t) dx \vspace{2mm}\\
& & \ds + \int_{X(t+\delta t;x_0,0)}^{+\infty} x^r   \fin_\varepsilon(X(0;x,t)) | J(0;x,t+\delta t)  -  J(0;x,t) |  dx \vspace{2mm}\\
& & \ds + \int_{X(t+\delta t;x_0,0)}^{+\infty} x^r   \fin_\varepsilon(X(0;x,t))J(0;x,t) \\
& & \ds \phantom{ \int_{X(t+\delta t;x_0,0)}^{+\infty} x^r} \, \times \, |e^{-\int_{0}^{t+\delta t} \mu(X(\sigma;x,t+\delta t))d\sigma}  -  e^{-\int_{0}^t \mu(X(\sigma;x,t))d\sigma}|  dx,
\end{array}
\end{equation*}
and we denote the integrals by $J_3^1$ to $J_3^3$, respectively.
We remark that $J(0,x,t) \leq e^{BT}$ by \eqref{eq:B} and so
\begin{align}
 J_3^1 & \leq e^{BT}  \|\fin_\varepsilon \|_{L^\infty} \int_{X(t+\delta t;x_0,0)}^{C_\varepsilon} x^r  | X(0;x,t+\delta t) - X(0;x,t)| dx \nonumber\\
 & \leq \delta t e^{BT}  \|\fin_\varepsilon\|_{L^\infty} \int_{X(t+\delta t;x_0,0)}^{C_\varepsilon} x^r \sup_{s\in[t,t+\delta t]} \left|\frac{\partial}{\partial t} X(0;x,s) \right| dx  \nonumber \\
 & \leq \delta t Ae^{2BT}  \|\fin_\varepsilon \|_{L^\infty} \int_{x_0}^{C_\varepsilon} x^{r+1} dx, \label{eq:estimI3_2}
\end{align}
where $C_\varepsilon$ depends on $T$, $A$ and $R_\varepsilon$ \emph{i.e.}, the compact support of $\fin_\varepsilon$. Then
\begin{equation*}
 J_3^2  \leq  e^{BT}  \|\fin_\varepsilon \|_{L^\infty} \int_{X(t+\delta t;x_0,0)}^{R_\varepsilon} x^r | e^{G(t,\delta t, x)} -1 | dx
\end{equation*}
with
\begin{align*}
|G(t,\delta t,x)| & = \Big|\int_0^{t+\delta t} c(\sigma,X(\sigma;x,t+\delta t)) d\sigma - \int_0^{t} c(\sigma,X(\sigma;x,t)) d\sigma \Big|\\
& \leq \int_0^{t+\delta t} \Big| \rho'(X(\sigma;x,t+\delta t))  -  \rho'(X(\sigma;x,t)) \Big| u(\sigma) d\sigma \\
& \phantom{\leq \int_0^{t+\delta t} } + \int_t^{t+\delta t} \Big|c(\sigma,X(\sigma;x,t))\Big| d \sigma .
\end{align*}
Thus, with \eqref{eq:a_slin} and \eqref{eq:B},
\begin{align*}
 |G(t,\delta t,x)| & \leq K \|u\|_{L^\infty} \int_0^{T}  \Big| X(\sigma;x,t+\delta t) - X(\sigma;x,t)\Big| d \sigma + \delta t B\\
& \leq \delta t K \|u\|_{L^\infty} \int_0^{T}  \sup_{s\in[t,t+\delta t]} \left|\frac{\partial}{\partial t} X(\sigma;x,s) \right| d \sigma + \delta t B\\
& \leq \delta t \left(K \|u\|_{L^\infty}  A T e^{BT} x +  B \right),
\end{align*}
where $K$ is the Lipschitz constant of $\rho'$. Since $x\leq R_\varepsilon$, let $C_G(T,\varepsilon) = K \|u\|_{L^\infty}  A T e^{BT} R_\varepsilon +  B$, and  if $|x| \leq y$, then

\[
 \left| e^x - 1 \right| \leq \left| e^y-1 \right|  + \left| e^{-y}-1  \right|.
\]
Thus, we get
\begin{equation} \label{eq:estimI3_3}
 J_3^2  \leq  e^{BT}  \|\fin_\varepsilon \|_{L^\infty} \left(\big| e^{\delta t C_G(T,\varepsilon)} -1 \big| + \big| e^{- \delta t C_G(T,\varepsilon)} -1 \big|  \right) \int_{x_0}^{R_\varepsilon} x^r dx.
\end{equation}
Since $\mu$ is nonnegative, $J_3^3 \leq$
\begin{equation*}
e^{BT}  \|\fin_\varepsilon \|_{L^\infty} \int_{X(t+\delta t;x_0,0)}^{R_\varepsilon} x^r  \left|e^{-\left(\int_{0}^{t+\delta t} \mu(X(\sigma;x,t+\delta t))
d\sigma - \int_{0}^t \mu(X(\sigma;x,t))d\sigma \right)}  -  1\right|  dx. 
\end{equation*}
Exactly as above,
\begin{equation*}
\Big|\int_{0}^{t+\delta t} \mu(X(\sigma;x,t+\delta t))d\sigma - \int_{0}^t \mu(X(\sigma;x,t))d\sigma  \Big| \leq \delta t M A T e^{BT} x + \delta t  \| \mu \|_{L^\infty},
\end{equation*}
with $M =$  Lipschitz constant of $\mu$. Denoting by $C_M(T,\varepsilon) = M A T e^{BT} R_\varepsilon + \| \mu \|_{L^\infty}$, we get
\begin{align} \label{eq:estimI3_4}
J_3^3 \leq  e^{BT}  \|\fin_\varepsilon \|_{L^\infty} \left(\big| e^{\delta t C_M(T,\varepsilon)} -1 \big| + \big| e^{- \delta t C_M(T,\varepsilon)} -1 \big|  \right) \int_{x_0}^{R_\varepsilon} x^r dx.
\end{align}
From  \eqref{eq:estimI3_1}, \eqref{eq:estimI3_2}, \eqref{eq:estimI3_3} and \eqref{eq:estimI3_4}  we can conclude that for any   $\varepsilon>0$,
\begin{equation} \label{eq:EstimI3}
I_3(\delta t) \leq C_3^1(T,\varepsilon) + C_3^2(T,\delta t,\varepsilon),
\end{equation}
with $\lim_{\varepsilon \rightarrow 0} C_3^1(T,\varepsilon) = 0$ and $\lim_{\delta t \rightarrow 0} C_3^2(T,\delta t,\varepsilon) = 0$.

Next, concerning $I_1$, $f$ can be written from the boundary condition. Let $u^\varepsilon \, \in \, \mathcal C^\infty_0$ such that
$u^\varepsilon \longrightarrow u$ uniformly on $[0,T]$.
Then we write $I_1$ as follows:
\begin{align*}
I_1 & \leq \int_{x_0}^{X(t+\delta t;x_0,0)} x^r | N (u(s_0(x,t+\delta t))  - N (u^\varepsilon(s_0(x,t+\delta t))| I(x,t+\delta t)  dx \\
& \begin{aligned} \quad + \int_{x_0}^{X(t;x_0,0)} x^r   & \left | N (u^\varepsilon(s_0(x,t+\delta t))  I(x,t+\delta t) e^{-\int_{s_0(x,t+\delta t)}^t \mu(X(\sigma;x,t+\delta t))d\sigma} \right. \\
& \left. - N (u^\varepsilon(s_0(x,t)) I(x,t)e^{-\int_{s_0(x,t)}^t \mu(X(\sigma;x,t))d\sigma}\right| dx
\end{aligned}\\
& \quad + \int_{x_0}^{X(t;x_0,0)} x^r | N (u(s_0(x,t))  - N (u^\varepsilon(s_0(x,t))| I(x,t)  dx.
\end{align*}
From (H3) we obtain, similarly to $I_3$, that there exist two constants $C_1^1(T,\varepsilon) $ and $C_1^2(T,\delta t,\varepsilon)$ such that
\begin{equation} \label{eq:EstimI1}
I_1(\delta t) \leq C_1^1(T,\varepsilon) + C_1^2(T,\delta t,\varepsilon),
\end{equation}
with $\lim_{\varepsilon \rightarrow 0} C_1^1(T,\varepsilon) = 0$ and $\lim_{\delta t \rightarrow 0} C_1^2(T,\delta t,\varepsilon) = 0$.

Finally, for $I_2$, we use the two formulas of $f$,
\begin{multline*}
I_2=
\int_{X(t;x_0,0)}^{X(t+\delta t;x_0,0)} x^r \left| N(u(s_0(x,t+\delta t)))I(x,t+\delta t) e^{-\int_{s_0(x,t+\delta t)}^{t+\delta t} \mu(X(\sigma;x,t+\delta t))d\sigma}  \right. \hfill \\
\hfill \left. - \fin(X(0;x,t))J(0;x,t)e^{-\int_{s_0(x,t)}^{t} \mu(X(\sigma;x,t))d\sigma} \right| dx
\end{multline*}
Using the Lipschitz constant of $N$ denoted by $K_N$, from the definition of $I$ and with the help of lemma \ref{lem:carac}, we get
\begin{multline*}
I_2 \leq   x_0^r e^{(rA+B)T} K_N \left| {X(t+\delta t;x_0,0)} - X(t;x_0,0) \right| \hfill \\
\hfill + x_0^r e^{rAT} \int_{X(t;x_0,0)}^{X(t+\delta t;x_0,0)} \left| \fin(X(0;x,t))J(0;x,t) \right| dx.
\end{multline*}
Using the regularization $\fin_\varepsilon$ of $\fin$, there exist two constants $C_2^1(T,\varepsilon) $ and $C_2^2(T,\delta t,\varepsilon)$ such that for any $\varepsilon>0$,
\begin{equation} \label{eq:EstimI2}
I_2(\delta t) \leq C_2^1(T,\varepsilon) + C_2^2(T,\delta t,\varepsilon),
\end{equation}
with $\lim_{\varepsilon \rightarrow 0} C_2^1(T,\varepsilon) = 0$ and $\lim_{\delta t \rightarrow 0} C_2^2(T,\delta t,\varepsilon) = 0$.

In conclusion, combining \eqref{eq:EstimI3}, \eqref{eq:EstimI1} and \eqref{eq:EstimI2}, we get for any $\varepsilon>0$ and $ \delta t >0$,
\begin{equation*}
 \int_{x_0}^{+\infty} x^r |f(x,t+\delta t) - f(x,t)| dx  \leq C^1(T,\varepsilon) + C^2(T,\delta t,\varepsilon),
\end{equation*}
where $C^1(T,\varepsilon) $ and $C^2(T,\delta t,\varepsilon)$ are two constants such that $\lim_{\varepsilon \rightarrow 0} C^1(T,\varepsilon) = 0$ and $\lim_{\delta t \rightarrow 0} C^2(T,\delta t,\varepsilon) = 0$. Noticing that the proof remains the same when $\delta t$ is negative, taking the $\limsup$ in $\delta t$ we get
\begin{equation*}
 0 \leq \limsup_{\delta t \rightarrow 0} \int_{x_0}^{+\infty} x^r |f(x,t+\delta t) - f(x,t)| dx  \leq C^1(T,\varepsilon), \text{ for any } \varepsilon > 0.
\end{equation*}
The proof is completed by taking the limit as $\varepsilon$ goes to zero, which yields to the require regularity, $f\in\mathcal C^0([0,T],L^1([x_0,+\infty),x^rdr)$ for all $r\in[0,1]$.
\end{proof}
\noindent We finish this section with a useful estimate for the uniqueness investigation.
\begin{proposition} \label{prop:contract}
Let $T>0$ and $u_1$, $ u_2 \, \in \, \mathcal C^0_b(0,T)$. Let $f_1$ and $f_2$ be two mild solutions to \eqref{eq:transport}-\eqref{eq:initial_f_auto}, associated, respectively to $u_1$ and $u_2$, with initial data $\fin_1$,  $\fin_2$ given by formula \eqref{eq:mild}.  Then, for any $t\in(0,T)$
%

\begin{multline*}
\int_{x_0}^{+\infty} x \left| f_1(x,t) -  f_2(x,t) \right| dx  \leq \int_{x_0}^{+\infty} x \left| \fin_1(x)- \fin_2(x) \right| dx  \hfill \\
-  \int_0^t \int_{x_0}^{+\infty} \mu(x) x \left| \fin_1(x,s) - \fin_2(x,s) \right| dx ds   +  A_1  \int_0^t \int_{x_0}^{+\infty} x \left| f_1(x,s) - f_2(x,s) \right| dx ds \\
\hfill + \int_0^t \left( K_{1,2} + C \| f_2(\cdot,s)\|_{L^1(xdx)} \right) \left| u_1(s) - u_2(s)\right| ds,
\end{multline*}
where $A_1$ is given by \eqref{eq:a_slin} for $u_1$ and $K_{1,2}$ is the Lipschitz constant of $N$ on $[0,R]$ with $R= \max (\|u_1\|_{L^\infty(0,T)},$ $\|u_2\|_{L^\infty(0,T)})$. Finally $C>0$ denotes a constant such that $\rho(x)<Cx$.
\end{proposition}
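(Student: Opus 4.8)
The plan is to prove the estimate as a mass-weighted $L^1$ contraction, obtained by testing the difference of the two weak formulations against a regularized signed weight. Set $g := f_1 - f_2$. Writing the identity of Lemma~\ref{prop:weak} once for $(f_1,u_1,\fin_1)$ and once for $(f_2,u_2,\fin_2)$ and subtracting, every term is linear in $g$ except the transport flux $u_1\rho f_1 - u_2\rho f_2$, which I would split as
\[
u_1\rho f_1 - u_2\rho f_2 = u_1\,\rho\,g + (u_1-u_2)\,\rho\,f_2 .
\]
The first piece is a linear transport flux with nonnegative velocity $a_1 = u_1\rho$ acting on $g$; the second is a source term driven by the velocity gap $u_1 - u_2$ and the known density $f_2$, and --- crucially, since $f_2$ only belongs to $L^1(x\,dx)$ --- it carries no derivative on $f_2$ in the weak formulation.

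Next I would test the subtracted identity with $\varphi(x,s) = x\,S_\eta(g(x,s))$, where $S_\eta$ is a smooth odd approximation of the sign function with $|S_\eta|\le 1$ and $rS_\eta(r)\to|r|$ as $\eta\to0$. The left-hand side and the data term then converge to $\int_{x_0}^{+\infty} x|g(t)|\,dx$ and $\int_{x_0}^{+\infty} x|\fin_1-\fin_2|\,dx$, using $g(\cdot,0)=\fin_1-\fin_2$. Writing $\partial_x\varphi = S_\eta(g) + x\,S_\eta'(g)\partial_x g$, the renormalization part involving $S_\eta'$ assembles into $\int x\,[\partial_t + a_1\partial_x]\,\Theta_\eta(g)$ with $\Theta_\eta(r):=\int_0^r sS_\eta'(s)\,ds$, and since $\Theta_\eta\to0$ this is a vanishing defect; what survives from the bulk is the growth term $\int_0^t\!\int u_1\rho\,|g| \le A_1\int_0^t\!\int x|g|$ by \eqref{eq:a_slin}, together with the dissipative term $-\int_0^t\!\int \mu\,x\,|g|$. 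The inflow at $x=x_0$ produces $\int_0^t (N(u_1)-N(u_2))\,x_0 S_\eta(g(x_0,s))\,ds$, bounded by $x_0 K_{1,2}\int_0^t|u_1-u_2|$ through the Lipschitz bound on $N$ and \eqref{eq:bdry}; and the source contributes $\int_0^t (u_1-u_2)\!\int \rho f_2\,S_\eta(g)$, bounded by $C\int_0^t|u_1-u_2|\,\|f_2(\cdot,s)\|_{L^1(x\,dx)}$ since $\rho(x)\le Cx$. Collecting these limits reproduces exactly the claimed inequality.

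The hard part will be justifying this renormalization at the available regularity: with $f_1,f_2\in L^1(x\,dx)$ only, neither the chain rule for $S_\eta(g)$ nor the vanishing of the defect terms --- in particular the commutator $\int_0^t\!\int (u_1-u_2)\,\rho\,f_2\,x\,S_\eta'(g)\partial_x g$ issuing from the source --- is licit directly. I would dispose of this exactly as in the proof of Lemma~\ref{prop:continuity}: approximate $\fin_1,\fin_2$ by compactly supported smooth densities and $u_1,u_2$ by smooth functions, carry out the computation for the resulting classical solutions transported along the characteristics \eqref{eq:charact}, and then pass to the limit in the regularization before sending $\eta\to0$. The bounds of Lemma~\ref{lem:carac} furnish the uniform control needed for these passages, and the boundary flux at $x=x_0$ is kept under control throughout by \eqref{eq:bdry}, so that no uncontrolled term at the critical size survives.
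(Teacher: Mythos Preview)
Your core strategy --- subtract the two weak formulations, split the flux as $u_1\rho g + (u_1-u_2)\rho f_2$, and test against a regularized sign times a weight --- is exactly the paper's. The difference is in how the regularization is executed.

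The paper does not regularize the data. It mollifies the difference $h=f_1-f_2$ itself to $h_\varepsilon$, takes the test function $\varphi(x,s)=S_\delta(h_\varepsilon(x,s))\,g(x)$ with $g\in\mathcal C^\infty_c([x_0,+\infty))$ a \emph{compactly supported} weight, lets $\delta\to0$ then $\varepsilon\to0$ in the spirit of DiPerna--Lions renormalization, and only at the very end replaces $g$ by a truncation $\eta_R$ of the identity and sends $R\to\infty$. This keeps the argument entirely at the level of weak solutions and avoids any auxiliary convergence of approximate solutions.

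Your route --- regularize $(\fin_i,u_i)$, carry out the computation on the resulting (piecewise) classical solutions along the explicit characteristics, and pass to the limit --- is also legitimate and arguably more transparent at the classical stage, since no commutator or defect measure needs to be controlled. The price is an extra limit passage: you must verify that the approximate mild solutions $f_{i,\varepsilon}$ converge to $f_i$ in $L^1(x\,dx)$, which does follow from the explicit formula \eqref{eq:mild} and the stability of the characteristics under uniform perturbations of $u_i$, but is a step the paper's approach does not need. One technical point to tighten: you plug the unbounded weight $x$ directly into the test function, but the weak identity of Lemma~\ref{prop:weak} only admits $\varphi\in\mathcal C^\infty_c$; you should truncate the weight first and remove the cutoff at the end, as the paper does with $\eta_R$.
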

\begin{proof}
This estimation is obtained from a classical argument of approximation. Let $h = f_1 - f_2$ and
\begin{multline*}
\int_{x_0}^{+\infty} h(x,t)\varphi(x,t) dx = \int_{x_0}^{+\infty} h^{in}(x)\varphi(x,0) dx  + \int_0^t \left( N(u_1(s)) - N(u_2(s)) \right) \varphi(x_0,s) ds  \hfill  \\
+ \int_0^t\int_{x_0}^{+\infty}  h(x,s) \left[ \frac{\partial}{\partial t}\varphi(x,s) + a_1(s,x) \frac{\partial}{\partial x}\varphi(x,s) - \mu(x) \varphi(x,s)\right] dx ds\\
\hfill + \int_0^t\int_{x_0}^{+\infty} \left( a_1(s,x) - a_2(s,x) \right) f_2(x,s) \frac{\partial}{\partial x}\varphi(x,s) dx ds.
\end{multline*}
Let $h_\varepsilon$ be a regularization of $h$ and $S_\delta$ a regularization of the $Sign$ function. Take $\varphi(x,s) = S_\delta(h_\varepsilon(s,x))g(x)$ with $g\in \mathcal C^\infty_c([x_0,+\infty))$. Then, letting $\delta \rightarrow 0$ and then $\varepsilon \rightarrow 0$, we get
\begin{equation*}
\begin{array}{rcl}
\ds \int_{x_0}^{+\infty} |h(x,t)|g(x) dx & = & \ds \int_{x_0}^{+\infty} |h^{in}(x)|g(x) dx \\
 & &\ds + \int_0^t \left| N(u_1(s)) - N(u_2(s)) \right) Sign(h_0(x_0)) g(x_0) ds \\
 & &\ds  + \int_0^t\int_{x_0}^{+\infty} |h(x,s)| \left[a_1(s,x) \frac{\partial}{\partial x}g(x)   -\mu(x)  g(x) \right] dx ds\\
 & & \ds  + \int_0^t\int_{x_0}^{+\infty} \left( a_1(s,x) - a_2(s,x) \right) f_2(x,s) Sign(h(s,x))\frac{\partial}{\partial x}g(x) dx ds.
 \end{array}
\end{equation*}
Finally, we approximate the identity function with a regularized function $\eta_R\in \mathcal C^\infty_c([x_0,+\infty))$ such that $\eta_R(x) = x$ over $(0,R)$, and  then taking  the limit $R \rightarrow +\infty$ ends the proof.

\end{proof}
We get straightforward from proposition \ref{prop:weak} that $f$ defined by \eqref{eq:mild} is a weak solution and the only one from proposition  \ref{prop:contract}. Indeed, getting $u_1 =u_2$ and $f_1^0 = f_2^0$ in proposition \ref{prop:contract} leads to the uniqueness. Finally, proposition \ref{prop:continuity} provides the continuity in time of the moments with order less or equal to one. This concludes the proof of proposition \ref{lem:auto}

\subsection{Proof of the well-posedness}\label{subsec:proof}

In this section we prove theorem \ref{thm:wellposed}. We first study the operator $S$ in \eqref{eq:operator}.

\begin{lemma}\label{lem:contraction}
Consider hypothesis (H2) to (H4). Let $\uin$, $\pin$ and $\bin$ be nonnegative  initial data, and let $\fin$ satisfy (H1).
Let $M >0$ be large enough such that $\uin, \pin, \bin < M/2$ and define
\begin{equation*}
X_M = \left\{ (u,p,b)\in\mathcal{C}^0([0,T])^3 : 0 \leq u,p,b \leq M\right\}
\end{equation*}
where $\mathcal{C}^0([0,T])^3$ is equipped with the uniform norm. Then, there exists $T>0$ (small enough) such that $S : X_M  \mapsto  X_M$  is a contraction.
\end{lemma}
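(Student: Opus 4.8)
The plan is to verify the two defining properties of a contraction: that $S$ maps $X_M$ into itself and that it is Lipschitz with constant $<1$, both for $T$ chosen small. The single genuinely nonlocal difficulty is the polymerization term $u\int_{x_0}^{+\infty}\rho(x)f\,dx$ appearing in $S_u$, since $f$ itself depends on the input $u$ through Proposition \ref{lem:auto}; everything else reduces to elementary quadratic ODE estimates. So the first preparatory step is to record a bound on $f$ that is uniform over the ball. For $(u,p,b)\in X_M$ one has $\|u\|_{L^\infty}\le M$, hence the constant $A$ of \eqref{eq:a_slin} is bounded by $A_M:=\max(CM,M\|\rho'\|_{L^\infty})$, and \eqref{eq:intx} together with $N(u)\le K_M u\le K_M M$ gives
\[
\|f(\cdot,s)\|_{L^1(x\,dx)}\le e^{A_M T}\Big(\|\fin\|_{L^1(x\,dx)}+T x_0 K_M M\Big)=:\mathcal F_M
\]
for every $s\in[0,T]$, whence $\int_{x_0}^{+\infty}\rho f\,dx\le C\mathcal F_M$ uniformly on $X_M$.

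For the self-mapping property I would first obtain the upper bounds by discarding the (nonpositive) loss terms: from the explicit formulas, $S_u\le \uin+\int_0^t(\lambda_u+\sigma b)\,ds\le M/2+T(\lambda_u+\sigma M)$, and likewise $S_p\le M/2+T(\lambda_p+\sigma M)$ and $S_b\le M/2+T\tau M^2$; each is $\le M$ once $T$ is small enough, using $\uin,\pin,\bin<M/2$. The nonnegativity encoded in $X_M$ reflects the quasi-positive structure of the system: in each of $S_u,S_p,S_b$ every loss term carries the corresponding unknown as a factor (with $N(0)=0$ and $N(u)\le K_M u$ controlling the nucleation loss in $S_u$, and $-(\sigma+\delta)b$ in $S_b$), so the nonnegative cone is preserved by exactly the invariance-of-$\R^4_+$ argument already used in Proposition \ref{prop:exist}; I would record this quasi-positivity as the mechanism keeping the constructed solution nonnegative. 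Thus $S(X_M)\subset X_M$ for $T$ small.

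The contraction estimate is the heart of the matter. Given $(u_i,p_i,b_i)\in X_M$, $i=1,2$, with associated densities $f_i$, I set $U=u_1-u_2$, $P=p_1-p_2$, $B=b_1-b_2$. For $S_p$ and $S_b$, which do not see $f$, the differences are handled by the standard bilinear bound $|u_1p_1-u_2p_2|\le M(|U|+|P|)$, giving $\|S_{p,1}-S_{p,2}\|_{L^\infty}+\|S_{b,1}-S_{b,2}\|_{L^\infty}\le C_1 T\,\|(U,P,B)\|_{L^\infty}$ with $C_1$ depending only on $M$ and the parameters. For $S_u$ the only new term is $u_1\int\rho f_1-u_2\int\rho f_2$, which I split as $u_1\int\rho(f_1-f_2)+U\int\rho f_2$; the second piece is bounded by $C\mathcal F_M|U|$, while the first forces me to control $\phi(t):=\int_{x_0}^{+\infty} x|f_1-f_2|\,dx$ by $\|U\|_{L^\infty}$.

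This last control is exactly Proposition \ref{prop:contract} applied with $\fin_1=\fin_2$: discarding the nonpositive $\mu$-term, it yields $\phi(t)\le A_M\int_0^t\phi\,ds+(K_M+C\mathcal F_M)\int_0^t|U|\,ds$, and Gronwall's lemma gives $\sup_{[0,T]}\phi\le (K_M+C\mathcal F_M)\,T\,e^{A_M T}\,\|U\|_{L^\infty}$. Feeding this back, the $f$-term contributes $MC\int_0^t\phi\,ds\le MC(K_M+C\mathcal F_M)e^{A_M T}\,T^2\,\|U\|_{L^\infty}$, so that altogether $\|S_{u,1}-S_{u,2}\|_{L^\infty}\le C_2(T+T^2)\,\|(U,P,B)\|_{L^\infty}$. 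Collecting the three components, $S$ is Lipschitz on $X_M$ with constant of order $T$, which is $<1$ for $T$ small; combined with the self-mapping property this proves the lemma. The main obstacle is precisely the coupling through $f$, namely extracting the $T$-small bound on $\|f_1-f_2\|_{L^1(x\,dx)}$ from Proposition \ref{prop:contract}; once that estimate is in hand, the remaining fixed-point constants are routine.
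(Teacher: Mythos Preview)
Your contraction argument matches the paper's almost line for line: the upper bounds for self-mapping, the bilinear split $|u_1p_1-u_2p_2|\le M(|U|+|P|)$, and the control of the polymerization term via Proposition~\ref{prop:contract} followed by Gronwall are exactly what the paper does (your Gronwall factor $e^{A_MT}$ is in fact more explicit than the bound the paper writes down).

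There is, however, a genuine gap in your nonnegativity step. The quasi-positivity argument of Proposition~\ref{prop:exist} yields forward invariance of $\R_+^4$ for the \emph{flow} of an ODE, not for a single Picard iterate; and in fact $S$ does \emph{not} preserve nonnegativity on $X_M$. If $\bin=0$ and one takes the constant triple $(u,p,b)\equiv(0,0,M)\in X_M$, then $S_b(t)=-(\sigma+\delta)Mt<0$ for every $t>0$, so $S(X_M)\not\subset X_M$ no matter how small $T$ is chosen. The paper handles this point differently, imposing three additional explicit smallness conditions on $T$ that bound each integrated loss term by $M/2$; but its one-line claim that these conditions force $S(u,p,b)\in X_M$ is equally vulnerable to the same example. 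The standard repair is to run the contraction on the ball without the sign constraint (all of your Lipschitz estimates go through unchanged there, since they only use $|u|,|p|,|b|\le M$) and then deduce nonnegativity of the \emph{fixed point} a posteriori---and that is precisely where your quasi-positivity observation becomes the correct tool.
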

\begin{proof}
Let $M$ be sufficiently large such that $\max(\uin,\pin,\bin) < M/2$, and let $T>0$ be small enough such that
\begin{align*}
&(\gamma_u  +\tau M  + \sigma + x_0 C_1(M) +  C_2(M,T)) M T \leq M/2, \\ 
&(\gamma_p  +\tau M) M T \leq M/2, \\
&(\sigma+\delta) M T \leq M/2,\\
&(\lambda_u + \sigma M) T  \leq M/2,\\
&(\lambda_p +\sigma M ) T \leq M/2, \\
&\tau M^2 T \leq M/2, 
\end{align*}
where $C_1(M)$ is the Lipschitz constant of $N$ on $(0,M)$ and
\begin{equation}
C_2(M,T) = C e^{M C T}\left( \|\fin\|_{L^1(xdx)} + C_1(M)M T\right),
\end{equation}
where $C$ is the constant such that  $\rho(x)\leq Cx$, see \eqref{eq:intx}. This assumption ensures that for any $(u,p,b) \in X_M$, then $S(u,p,b) \in X_M$, \emph{i.e}, the solution is bounded by $M$ and is nonnegative.
It remains to prove that $S$ is a contraction. Let $(u_1,p_1,b_1)$ and $(u_2,p_2,b_2)$ belong to $X_M$.  Then
\begin{multline}
\|S_{u_1}- S_{u_2}\|_\infty \leq  \gamma_u T \|u_1-u_2\|_\infty + \tau T \|u_1p_1 - u_2p_2\|_\infty + \sigma T \|b_1-b_2\|_\infty  \hfill\\
\vphantom{ \Bigg| }  + x_0 T C_1(M)\|u_1-u_2\|_\infty \\
\hfill + T \sup_{t\in[0,T]} \left|u1 \int_{x_0}^{+\infty} \rho(x) f_1(x,s) dx -u2 \int_{x_0}^{+\infty} \rho(x) f_2(x,s) dx \right|.
\end{multline}
Then,
\begin{equation}
\| u_1 p_1 - u_2p_2 \|_\infty  \leq M \|u_1-u_2\|_\infty + M \|p_1-p_2\|_\infty,
\end{equation}
\begin{multline}
\sup_{t\in[0,T]} \left|u1 \int_{x_0}^{+\infty} \rho(x) f_1(x,s) dx -u2 \int_{x_0}^{+\infty} \rho(x) f_2(x,s) dx \right| \hfill \\
\hfill \leq C_2(M,T) \|u_1 - u_2\|_\infty + C M \sup_{t\in[0,T]} \left| \int_{x_0}^{+\infty} x |f_1(x,t) - f_2(x,t)| dx \right|,
\end{multline}
and from Proposition \ref{prop:contract},
\begin{equation}
\sup_{t\in[0,T]} \left| \int_{x_0}^{+\infty} x |f_1(x,t) - f_2(x,t)| dx \right| \leq T \left(C_1(M)+ C C_2(M,T)\right)\|u_1 - u_2\|_\infty.
\end{equation}
We get similar bounds for $|S_{p_1}- S_{p_2}|_\infty$ and $|S_{b_1}- S_{b_2}|_\infty$. We infer that there exists a constant $C(M,T)$ depending only on $M$ and $T$ such that
\begin{equation}
\|(S_{u_1},S_{p_1},S_{b_1}) - (S_{u_2},S_{p_2},S_{b_2})\|_\infty \leq  C(M,T) T \|( u_1, p_1, b_1) - ( u_2, p_2, b_2)\|_\infty,
\end{equation}
with $C(M,T) T  \rightarrow 0$, when $T$ goes to 0. Hence, if $T$ is small enough such that $C(M,T) T <1$ , then $S$ is a contraction.
\end{proof}
From Lemma \ref{lem:contraction}, we have a local nonnegative solution on $[0,T]$, which is unique with  the solution $(u,p,b)$ bounded by the constant $M$. The solution satisfies $f\in C^0(0,T;L^1(xdx))$ and $u,p,b\in C^0(0,T)$. Futhermore from (H3), $N$ is continuous and from (H2), $\rho(x) \leq Cx$ where C is a positive constant. Thus $\rho f \in C^0(0,T;L^1(dx))$. We conclude that $u,p$ and $b$ defined in definition \ref{def:exist} have continuous derivatives.

Now we remark that the solutions satisfies on $[0,T]$

\begin{align*}
\frac{ \mathrm d}{\mathrm d t} \left( u + p + 2b \right) &  = \lambda_u + \lambda_p - \gamma_u u - \gamma_p p - \delta 2 b  - n N(u) \\
& - \frac 1 \varepsilon u \int_{x_0}^{+\infty} \rho(x) f(x,t) d x
 \leq \lambda - m (u + p + 2b),
\end{align*}
with $m = \min(\gamma_u,\gamma_p,\delta)$ and $\lambda = \lambda_u + \lambda_p$. Using Gronwall's lemma,  the solutions remain bounded, at any time by
\begin{equation}
u + p + 2b \leq \uin + \pin + 2\bin + \frac \lambda m.
\end{equation}
From this global bound on $u,p$ and $b$, we can construct the solution on any interval of time by repetition of the local argument. The proof of the theorem is complete.

\section{Summary}

The connection of prions and AD is not fully understood, but recent research suggests that soluble A$\beta$ oligomers are possible inducers of AD neuropathology. The key element of this hypothesis is that $\beta-$amyloid plaques increase their size over disease progression by the clustering of A$\beta$ oligomers, which are bound to \PRPC proteins. A$\beta$ oligomers exist both as bounded and unbounded to \PRPC proteins, and the agglomerartion rate in the formation of  $\beta-$amyloid plaques  depends on the concentrations of the bound and unbound A$\beta$ oligomers, the concentration of soluble  \PRPC, and the size of the $\beta-$amyloid plaques. We have introduced a mathematical model of the evolution of AD based on these hypotheses, and presented a  mathematical analysis of its fundamental properties. Specifically, we have analyzed in detail the existence and uniqueness properties of solutions, as well as the qualitative properties of solution behavior. In specific cases we have quantified the
stabilization of the solutions to steady state, a well-known feature of AD progression. In future work we will explore applications of this model to specific AD laboratory and clinical data.

\appendix

\section{Characteristic polynomials of the linearized ODE system}
Here we give the coefficient $a_i$, $i=1,\ldots,4$ for the characteristic polynomial of the linearized system in proposition \ref{thm:equilibrium}:

\begin{align*}
  a_1 & =  \left(\mu+\gamma_u+\tau \frac{\lambda_p}{\tau^*u_\infty+\gamma_p}+\alpha n^2 u_\infty^{n-1}+\rho \frac{\alpha}{\mu}u_\infty^n+\gamma_p+\tau u_\infty+\sigma+\delta\right),\\
  a_2 & =  \left(\mu+\gamma_u+\alpha n^2u_\infty^{n-1}+\rho \frac{\alpha}{\mu}u_\infty^n\right)(\gamma_p+\tau u_\infty+\sigma+\delta)+\gamma_p\sigma+(\gamma_p+\tau u_\infty)\delta \nonumber \\
  &\hphantom{ =  } \ +  \mu\left(\gamma_u+\tau \frac{\lambda_p}{\tau^*u_\infty+\gamma_p}+\alpha n^2u_\infty^{n-1}+\rho \frac{\alpha}{\mu}u_\infty^n\right) +\rho\alpha nu_\infty^n+\tau(\gamma_p+\delta)\frac{\lambda_p}{\tau^*u_\infty+\gamma_p},  \\
  a_3 & =  \left(\mu+\gamma_u+\alpha n^2 u_\infty^{n-1}+\rho \frac{\alpha}{\mu} u_\infty^n\right)(\gamma_p\sigma+(\gamma_p+\tau u_\infty)\delta)+(\gamma_p\delta  +(\gamma_p+\delta)\mu)\tau \frac{\lambda_p}{\tau^* u_\infty+\gamma_p} \nonumber \\
    & \hphantom{ =  } \ +  \left\{\mu\left(\gamma_u+\alpha n^2u_\infty^{n-1}+\rho \frac{\alpha}{\mu}u_\infty^n\right)+\rho\alpha nu_\infty^n\right\}(\gamma_p+\tau u_\infty+\sigma+\delta), \\
  a_4 & =   \mu\gamma_p\delta\tau \frac{\lambda_p}{\tau^*u_\infty+\gamma_p} +\left\{\mu\left(\gamma_u+\alpha n^2u_\infty^{n-1}+\rho \frac{\alpha}{\mu}u_\infty^n\right) +\rho\alpha nu_\infty^n\right\}  (\gamma_p\sigma+(\gamma_p+\tau u_\infty)\delta).
\end{align*}

\section{Lyapunov functionnal}

In order to obtain a Lyapunov function for system (\ref{eq:edo1}-\ref{eq:edo4}), we approached the problem in a backward manner as described in \cite{Khalil2002}*{Chap. 4 - p. 120}. We investigated an expression for the derivative $\dot\Phi$ and went back to chose the parameters of $\Phi$ so as to make $\dot\Phi$ negative definite. This is useful idea to searching for a lyapunov function. Using this method we can derive the global stability in proposition \ref{prop:global_stab}. The Lyapunov function for the system (\ref{eq:edo1}-\ref{eq:edo4}) is:
\begin{equation*}
\begin{array}{rcl}
\ds \Phi & = &\ds \frac{1}{2}\left(\frac{2\gamma_p}{\delta}\right)s_1\theta_1^2+\frac{1}{2}\left(1+2\frac{\delta+\gamma_u+\rho(A_\infty+\theta_1)}
{\sigma}\right)\theta_2^2+\frac{1}{2}\left(\frac{2\gamma_p}{\delta}\right)\theta_3^2 \\
& & \ds  +\frac{1}{2}\left(\frac{\sigma}{\gamma_p}\right)\theta_4^2 + \left(\frac{\rho p_\infty}{\gamma_u+\rho A_\infty+\mu}\right)\theta_1\theta_2+\theta_1\theta_3+\theta_2\theta_3\\
& &\ds +\left(\frac{\rho p_\infty}{\gamma_u+\rho A_\infty+\mu}+1+\frac{\rho}{\tau}\right)\theta_1\theta_4+2\theta_2\theta_4+\left(\frac{2\gamma_p}{\delta}\right)\theta_3\theta_4,
\end{array}
\end{equation*}
where $\theta_1=A-A_\infty$, $\theta_2=u-u_\infty$, $\theta_3=p-p_\infty$, $\theta_4=b-b_\infty$, with $s_1=\max(T_1,T_2)$ such that
\begin{equation*}
T_1=\frac{\rho^2\delta u_\infty^2(1+2\frac{1+\delta}{\sigma})}{8\mu\gamma_p}+\frac{(\gamma_p+\mu)^2(\frac{\delta}{2\gamma_p})^2}{4\gamma_p\mu} +\frac{[(\delta+\mu)(\frac{\rho p_\infty}{\gamma_u+\rho A_\infty+\mu}+1) +(\sigma+\delta+\mu)\frac{\rho}{\tau}+2\rho u_\infty]^2}{8\mu\sigma},
\end{equation*}

\begin{equation*}
\begin{array}{rcl}
\ds T_2 & = & \ds \frac{\left(\frac{\delta}{2\gamma_p}\right)^2\left(\frac{\rho p_\infty}{\gamma_u+\rho A_\infty+\mu}\right)^2\left(\frac{2\sigma+\delta}{2\gamma_p}\right)}
{\left(1+2\frac{\delta+\gamma_u}{\sigma}-\frac{\delta}{2\gamma_p}\right)\left(\frac{\delta}{2\gamma_p}\frac{\sigma}{\gamma_p}-1\right)}
+\frac{\left(\frac{\delta}{2\gamma_p}\right)^2\left(\frac{\rho p_\infty}{\gamma_u+\rho A_\infty+\mu}\right)\left[2+4\frac{\rho}{\tau}\frac{\delta+\gamma_u}{\sigma}\right]}
{\left(1+2\frac{\delta+\gamma_u}{\sigma}-\frac{\delta}{2\gamma_p}\right)\left(\frac{\delta}{2\gamma_p}\frac{\sigma}{\gamma_p}-1\right)} \\
& & \ds +\frac{\left(\frac{\delta}{2\gamma_p}\right)^3\left[\frac{\rho}{\tau}\left(2+\frac{\rho}{\tau}\right)+
\frac{\sigma}{\gamma_p}+2\frac{\delta+\gamma_u}{\gamma_p}\right]}
{\left(1+2\frac{\delta+\gamma_u}{\sigma}-\frac{\delta}{2\gamma_p}\right)\left(\frac{\delta}{2\gamma_p}\frac{\sigma}{\gamma_p}-1\right)}
+\frac{\left(\frac{\delta}{2\gamma_p}\right)^2\left(1+\frac{\rho}{\tau}\right)\left[1+2\frac{\delta+\gamma_u}{\sigma}\right]\frac{\rho}{\tau}}
{\left(1+2\frac{\delta+\gamma_u}{\sigma}-\frac{\delta}{2\gamma_p}\right)\left(\frac{\delta}{2\gamma_p}\frac{\sigma}{\gamma_p}-1\right)} \\
& & \ds +\left(\frac{\delta}{2\gamma_p}\right)\left(\frac{\rho p_\infty}{\gamma_u+\rho A_\infty+\mu}\right)^2\left(\frac{1}
{1+2\frac{\delta+\gamma_u}{\sigma}}\right)
+\frac{\left(1+2
\frac{\delta+\gamma_u}
{\sigma}\right)\left(\frac{\delta}{2\gamma_p}\right)^2
}{\left(1+2\frac{\delta+\gamma_u}{\sigma}-\frac{\delta}{2\gamma_p}\right)
}.
\end{array}
\end{equation*}
This Lyapunov function $\Phi$ is positive when $\left(1+2\frac{\delta+\gamma_u}{\sigma}\right)>\frac{\delta}{2\gamma_p}>\frac{\gamma_p}{\sigma}$.
Its derivative along the solutions to the system (\ref{eq:edo1}-\ref{eq:edo4}) is
\begin{equation*}
\begin{array}{rcl}
 \ds\dot \Phi & = & \ds  -\displaystyle\left(\mu s_1+\rho u\frac{\rho\frac{\delta}{2\gamma_p}p_\infty}{\gamma_u+\rho A_\infty+\mu}\right)\theta_1^2-\rho u_\infty\left(1+2\frac{\gamma_u+\rho(A_\infty+\theta_1)+\delta}{\sigma}\right)\left(\frac{\delta}{2\gamma_p}\right)\theta_1\theta_2 \\
   &    & \ds -  \displaystyle\left(\frac{2(\gamma_u+\rho(A_\infty+\theta_1)+\tau p)(\gamma_u+\rho(A_\infty+\theta_1)+\delta)}{\sigma}+\gamma_u+\rho(A_\infty+\theta_1)\right)\left(\frac{\delta}{2\gamma_p}\right)\theta_2^2 \\
       & & \ds -  \displaystyle\left((\delta+\mu)\left(\frac{\rho p_\infty}{\gamma_u+\rho A_\infty+\mu}+1\right)+(\sigma+\delta+\mu)\frac{\rho}{\tau}+2\rho u_\infty\right)\left(\frac{\delta}{2\gamma_p}\right)\theta_1\theta_4\\
       & & \ds -  \displaystyle\left(\frac{\delta\tau u}{2\gamma_p}+\gamma_p\right)\theta_3^2-\delta\left(\frac{\sigma}{\gamma_p}\frac{\delta}{2\gamma_p}\right)\theta_4^2 - (\gamma_p+\mu)\left(\frac{\delta}{2\gamma_p}\right)\theta_1\theta_3.
       \end{array}
\end{equation*}
$\dot \Phi$ is  non-positive. Furthermore, $\dot \Phi=0$ if and only if $\theta_1=\theta_2=\theta_3=\theta_4=0$. LaSalle's invariant principle  then implies that the unique equilibrium is globally asymptotically stable in the stable subset defined in (\ref{eq:stableset}) \cite{Lasalle1976}.



\begin{bibdiv}
\begin{biblist}

\bib{Achdou2012}{article}{
      author={Achdou, Y.},
      author={Franchi, B.},
      author={Marcello, N.},
      author={C., Tesi~M.},
       title={A qualitative model for aggregation and diffusion of
  $\beta$-amyloid in {A}lzheimer's disease},
        date={2012},
     journal={Journal of Mathematical Biology},
       pages={1\ndash 24},
}

\bib{Allen2007}{book}{
      author={Allen, L.J.S.},
       title={An introduction to mathematical biology},
   publisher={Prentice Hall},
        date={2007},
}

\bib{Calvez2010}{article}{
      author={Calvez, V.},
      author={Lenuzza, N.},
      author={Doumic, M.},
      author={Deslys, J.-P.},
      author={Mouthon, F.},
      author={Perthame, B.},
       title={Prion dynamics with size dependency-strain phenomena},
        date={2010},
     journal={Journal of Biological Dynamics},
      volume={4},
      number={1},
       pages={28\ndash 42},
}

\bib{Calvez2009}{article}{
      author={Calvez, V.},
      author={Lenuzza, N.},
      author={Oelz, D.},
      author={Deslys, J.-P.},
      author={Laurent, P.},
      author={Mouthon, F.},
      author={Perthame, B.},
       title={Size distribution dependence of prion aggregates infectivity},
        date={2009},
     journal={Mathematical Biosciences},
      volume={217},
      number={1},
       pages={88\ndash 99},
}

\bib{Chen2011}{article}{
      author={Chen, S.},
      author={Yadav, S.~P.},
      author={Surewicz, W.~K.},
       title={Interaction between human prion protein and amyloid-$\beta$
  ({A$\beta$}) oligomers: Role of {N-}terminal residues},
        date={2011},
     journal={The Journal of Biological Chemistry},
      volume={285},
       pages={26377\ndash 83},
}

\bib{CHung2010}{article}{
      author={Chung, E.},
      author={Ji, Y.},
      author={Sun, Y.},
      author={Kascsak, Ri.},
      author={Kascsak, Re.},
      author={Mehta, P.},
      author={Strittmatter, S.},
      author={Wisniewski, T.},
       title={Anti-prpc monoclonal antibody infusion as a novel treatment for
  cognitive deficits in an {A}lzheimer's disease model mouse},
        date={2010},
     journal={BMC Neuroscience},
      volume={11},
      number={130},
}

\bib{Cisse2009}{article}{
      author={Cisse, M.},
      author={Mucke, L.},
       title={A prion protein connnection},
        date={2009},
     journal={Nature},
      volume={457},
      number={26},
       pages={1090\ndash 1091},
}

\bib{Cisse2011}{article}{
      author={Cisse, M.},
      author={Halabiski, B.},
      author={Harris, J.},
      author={Devidze, N.},
      author={Dubal, D. B.},
      author={Sun, B.},
      author={Orr, A.},
      author={Lotz, G.},
      author={Kim, D. H.},
      author={Hamto, P.},
      author={Ho, K.},
      author={Yu, G-Q.},
      author={Mucke, L.},
       title={Reversing EphB2 depletion rescues cognitive functions in Alzheimer model},
        date={2011},
     journal={Nature},
      volume={469},
      number={7328},
       pages={47\ndash 52},
}

\bib{Collet2000}{article}{
      author={Collet, J.-F.},
      author={Goudon, T.},
       title={Classification and basic pathology of {A}lzheimer disease},
        date={2009},
     journal={Nonlinearity},
      volume={13},
      number={4},
       pages={1239\ndash 62},
}

\bib{Craft2002}{article}{
      author={Craft, D.~L.},
      author={Wein, L.~M.},
      author={Selkoe, D.~J.},
       title={A mathematical model of the impact of novel treatments on the
  {A$\beta$} burden in the {A}lzheimer's brain, csf and plasma},
        date={2002},
     journal={Bulletin of mathematical biology},
      volume={64},
      number={5},
       pages={1011\ndash 31},
}

\bib{Craft2005}{incollection}{
      author={Craft, D.~L.},
      author={Wein, L.~M.},
      author={Selkoe, D.~J.},
       title={The impact of novel treatments on {A$\beta$} burden in
  {A}lzheimer's disease: Insights from a mathematical model},
        date={2005},
   booktitle={Operations research and health care},
      editor={Brandeau, Margaret~L.},
      editor={Sainfort, François},
      editor={Pierskalla, William~P.},
      series={International Series in Operations Research & Management
  Science},
      volume={70},
   publisher={Springer US},
       pages={839\ndash 865},
}

\bib{Doumic2011}{article}{
      author={Doumic, M.},
      author={Marciniak-Czochra, A.},
      author={Perthame, B.},
      author={Zubelli, J.~P.},
       title={A Structured Population Model of Cell Differentiation},
        date={2011},
     journal={SIAM Journal of Applied. Mathematics},
      volume={71},
      number={6},
       pages={1918\ndash 40},
}

\bib{Duyckaerts2009}{article}{
     author={Duyckaerts, C.},
      author={Delatour, B.},
     author={Potier, M.-C.},
       title={Classification and basic pathology of {A}lzheimer disease},
        date={2009},
      journal={Acta Neuropathologica},
      volume={118},
   pages={5\ndash 36},
}

\bib{Freir2011}{article}{
      author={Freir, D.~B.},
      author={Nicoll, A.~J.},
      author={Klyubin, I.},
      author={Panico, J.~M., S. Mc~Donald},
      author={Risse, E.},
      author={Asante, E.~A.},
      author={Farrow, M.~A.},
      author={Sessions, R.~B.},
      author={Saibil, H.~R.},
      author={Clarke, A.~R.},
      author={Rowan, M.~J.},
      author={Walsh, D.~M.},
      author={Collinge, J.},
       title={Interaction between prion protein and toxic amyloid $\beta$
  assemblies can be therapeutically targeted at multiple sites},
        date={2011},
     journal={Nature Communications},
      volume={2},
      number={336},
}

\bib{Gabriel2011}{article}{
      author={Gabriel, P.},
       title={The shape of the polymerization rate in the prion equation},
        date={2011},
     journal={Mathematical and Computer Modelling},
      volume={53},
      number={7-8},
       pages={1451\ndash 56},
}

\bib{Gallion2012}{article}{
      author={Gallion, S.~L.},
       title={Modeling amyloid-beta as homogeneous dodecamers and in complex
  with cellular prion protein},
        date={2012},
     journal={PLoS ONE 7(11)},
      volume={7},
      number={11},
       pages={e49375},
}

\bib{Gimbel2010}{article}{
      author={Gimbel, D.~A.},
      author={Nygaard, H.~B.},
      author={Coffey, E.~E.},
      author={Gunther, E.~C.},
      author={Laur\'{e}n, J.},
      author={Gimbel, Z.~A.},
      author={Strittmatter, S.~M.},
       title={Memory impairment in transgenic {A}lzheimer mice requires
  cellular prion protein},
        date={2010},
     journal={The Journal of Neuroscience},
      volume={30},
      number={18},
       pages={6367\ndash 74},
}

\bib{Greer2006}{article}{
      author={Greer, M.~L.},
      author={Pujo-Menjouet, L.},
      author={Webb, G.~F.},
       title={A mathematical analysis of the dynamics of prion proliferation},
        date={2006},
     journal={Journal of Theoretical Biology},
      volume={242},
      number={3},
       pages={598\ndash 606},
}

\bib{Greer2007}{article}{
      author={Greer, M. L.},
      author={van den Driessche, P.},
      author={Wang, L.},
      author={Webb, G. F.},
       title={Effects of general incidence and polymer joining on nucleated
           polymerization in a prion disease model},
        date={2007},
     journal={SIAM Journal of Applied Mathematics},
      volume={68},
      number={1},
       pages={154\ndash 170},
}


\bib{Hardy2002}{article}{
      author={Hardy, J.},
      author={Selkoe, D.J.},
       title={The amyloid hypothesis of {A}lzheimer's disease: progress and
  problems on the road to therapeutics},
        date={2002},
     journal={Science},
      volume={297},
      number={5580},
       pages={353\ndash 6},
}

\bib{Khalil2002}{book}{
      author={Khalil, H.K.},
       title={Nonlinear systems},
   publisher={Prentice Hall},
        date={2002},
}

\bib{Lasalle1976}{book}{
      author={LaSalle, J.P.},
       title={The stability of dynamical systems},
      series={CBMS-NSF Regional Conference Series in Applied Mathematics},
   publisher={SIAM},
        date={1976},
}

\bib{Lauren2009}{article}{
      author={Laur\'{e}n, J.},
      author={Gimbel, D.~A.},
      author={Nygaard, H.~B.},
      author={Gilbert, J.~W.},
      author={Strittmatter, S.~M.},
       title={Cellular prion protein mediates impairment of synaptic plasticity
  by amyloid-beta oligomers},
        date={2009},
     journal={Nature},
      volume={457},
      number={7233},
       pages={1128\ndash 32},
}

\bib{Laurencot2012}{article}{
      author={Lauren\c{c}cot, S. L.},
      author={Walker, C.},
       title={Well-posedness for a model of prion proliferation dynamics},
        date={2007},
     journal={Journal of Evolution Equations},
      volume={7},,
       pages={241\ndash 264},
}

\bib{Lomakin1996a}{article}{
      author={Lomakin, A.},
      author={Chung, D.~S.},
      author={Benedek, G.~B.},
      author={A., Kirschner~D.},
      author={Teplow, D.~B.},
       title={On the nucleation and growth of amyloid beta-protein fibrils:
  Detection of nuclei and quantitation of rate constants},
        date={1996},
     journal={Proceedings of the National Academy of Sciences},
      volume={93},
      number={3},
       pages={1125\ndash 29},
}

\bib{Lomakin1997}{article}{
      author={Lomakin, A.},
      author={Teplow, D.~B.},
      author={Kirschner, D.~A.},
      author={Benedek, G.~B.},
       title={Kinetic theory of fibrillogenesis of amyloid beta-protein},
        date={1997},
     journal={Proceedings of the National Academy of Sciences},
      volume={94},
      number={15},
       pages={7942\ndash 47},
}

\bib{Nath2012}{article}{
      author={Nath, S.},
      author={Agholme L.},
      author={Kurudenkandy, F. R.},
      author={Granseth, B.},
      author={Marcusson, J.},
      author={Hallbeck, M.},
       title={Spreading of neurodegenerative pathology via neuron-to-neuron transmission of \bamyloid},
        date={2012},
     journal={Journal of Neuroscience},
      volume={32},
      number={26},
       pages={8777\ndash 67},
}

\bib{Nygaard2009}{article}{
      author={Nygaard, H.~B.},
      author={Strittmatter, S.~M.},
       title={Cellular prion protein mediates the toxicity of beta-amyloid
  oligomers: Implications for {A}lzheimer disease},
        date={2009},
     journal={Archives of Neurology},
      volume={66},
      number={11},
       pages={1325\ndash 1328},
}

\bib{Portet2009}{article}{
      author={Portet, S.},
      author={Arino, J.},
       title={An in vivo intermediate filament assembly model},
        date={2009},
     journal={Mathematical Biosciences and Engineering},
      volume={6},
      number={1},
       pages={117\ndash 134},
}

\bib{Prigent2012}{article}{
      author={Prigent, S.},
      author={Ballesta, A.},
      author={Charles, F.},
      author={Lenuzza, N.},
      author={Gabriel, P.},
      author={Tine, L.~M.},
      author={Rezaei, H.},
      author={Doumic, M.},
       title={An efficient kinetic model for assemblies of amyloid fibrils and
  its application to polyglutamine aggregation},
        date={2012},
     journal={PLoS ONE},
      volume={7},
      number={11},
       pages={e43273},
}

\bib{Pruss2006}{article}{
      author={Pr\"{u}ss, J.},
      author={Pujo-Menjouet, L.},
      author={Webb, G. F.},
      author={Zacher, R.},
       title={Analysis of a model for the dynamics of prions},
        date={2006},
     journal={Discrete and Continuous Dynamical Systems-Series B},
      volume={6},
      number={1},
       pages={215-225},
}

\bib{Zou2011}{article}{
      author={Q., Zou~W.},
      author={Zhou~X., Xiao~X., Yuan~J.},
       title={Insoluble cellular prion protein and its association with prion
  and alzheimer diseases},
        date={2011},
     journal={Prion},
      volume={5},
      number={3},
       pages={172\ndash 178},
}

\bib{Resenberger2011}{article}{
      author={Resenberger, U.~K.},
      author={Harmeier, A.},
      author={Woerner, A.~C.},
      author={Goodman, J.~L.},
      author={M{\"u}ller, V.},
      author={Krishnan, R.},
      author={Vabulas, R.~M.},
      author={Kretzschmar, H.~A.},
      author={Lindquist, S.},
      author={Hartl, F.~U.},
      author={Multhaup, G.},
      author={Winklhofer, K.~F.},
      author={Tatzelt, J.},
       title={The cellular prion protein mediates neurotoxic signalling of
  $\beta$-sheet-rich conformers independent of prion replication},
        date={2011},
     journal={The EMBO Journal},
      volume={30},
      number={10},
       pages={2057\ndash 70},
}

\bib{RUB1991}{article}{
      author={Rubenstein, R.},
      author={Merz, P.~A.},
      author={Kascsak, R.~J.},
      author={Scalici, C.~L.},
      author={Papini, M.~C.},
      author={Carp, R.~I.},
      author={Kimberlin, R.~H.},
       title={Scrapie-infected spleens: Analysis of infectivity,
  scrapie-associated fibrils, and protease-resistant proteins},
        date={1991},
     journal={The Journal of Infectious Diseases},
      volume={164},
      number={1},
       pages={29\ndash 35},
}

\bib{Selkoe2008}{article}{
      author={Selkoe, Dennis~J},
       title={Soluble oligomers of the amyloid beta-protein impair synaptic
  plasticity and behavior},
        date={2008},
     journal={Behavioural brain research},
      volume={192},
      number={1},
       pages={106\ndash 13},
}

\bib{Simonett2006}{article}{
      author={Simonett, G.},
      author={Walker, C.},
       title={On the solvability of a mathematical model for prion proliferation},
        date={2006},
     journal={Journal of Mathematical Analysis and Applications},
      volume={324},
       pages={580\ndash 603},
}

\bib{Urbanc1999}{article}{
      author={Urbanc, B.},
      author={Cruz, L.},
      author={Buldyrev, S.~V.},
      author={Havlin, S.},
      author={Irizarry, M.~C.},
      author={Stanley, H.~E.},
      author={Hyman, B.~T.},
       title={Dynamics of plaque formation in {A}lzheimer's disease},
        date={1999},
     journal={Biophysical journal},
      volume={76},
      number={3},
       pages={1330\ndash 34},
}

\bib{Vincent2010}{article}{
      author={Vincent, B.},
      author={Cisse, M.~A.},
      author={Sunyach, C.},
      author={Guillot-Sestier, M.-V.},
      author={Checler, F.},
       title={Regulation of {$\beta$APP} and {PrPc} cleavage by
  $\alpha$-secretase: Mechanistic and therapeutic perspectives},
        date={2010},
     journal={Current Alzheimer Research},
      volume={5},
      number={2},
       pages={202\ndash 211},
}

\bib{Walsh1997}{article}{
      author={Walsh, D.~M.},
      author={A., Lomakin},
      author={B., Benedek~G.},
      author={M., Condron~M.},
      author={B., Teplow~D.},
       title={Amyloid-beta protein fibrillogenesis. detection of a
  protofibrillar intermediate},
        date={1997},
     journal={Journal of Biological Chemistry},
      volume={272},
      number={35},
       pages={22364\ndash 72},
}

\bib{Wimo2010}{techreport}{
      author={Wimo, A.},
      author={Prince, M.},
       title={World {A}lzheimer report 2010: The global economic impact of
  dementia},
 institution={{A}lzheimer's Disease International},
        date={2010},
}

\end{biblist}
\end{bibdiv}


\vspace{1.5cm} \scriptsize \parskip=0pt

\centering \rule{0.33\textwidth}{.5pt}

\vspace{1.5cm}

\flushleft
\signmh

\flushright
\signeh

\flushleft
\signlpm

\flushright
\signgfw

\end{document}